\crefname{de}{Definition}{Definitions}
\crefname{prop}{Proposition}{Propositions}
\crefname{lem}{Lemma}{Lemmas}
\crefname{rem}{Remark}{Remarks}
\crefname{ex}{Example}{Examples}
\crefname{ob}{Observation}{Observations}
\crefname{tw}{Theorem}{Theorems}
\crefname{cor}{Corollary}{Corollaries}
\crefname{con}{Conjecture}{Conjectures}
\crefname{figure}{Figure}{Figures}
\crefname{section}{Section}{Sections}
\crefname{equation}{}{Equations}
\title[Algebras with two multiplications and their cumulants]
{Algebras with two multiplications \\ and their cumulants}
\author{Adam Burchardt}
\thanks{2010 Mathematics Subject Classification. Primary 05E40; Secondary 05C30, 05E05.}
\thanks{Key words and phrases. Algebras with two multiplications, cumulants, Leonov--Shiryaev's formula, Jack characters.}
\thanks{Research supported by Narodowe Centrum Nauki, grant number
2014/15/B/ST1/00064}
\titleformat{\section}[block]{\Large\bfseries\filcenter}{\thesection .}{0.5em}{}
\titleformat{\subsection}[runin]{\bfseries}{\thesubsection .}{0.5em}{}
\DeclareMathOperator{\Ch}{Ch}
\DeclareMathOperator{\p}{p}
\DeclareMathOperator{\id}{id}
\DeclarePairedDelimiter{\Kumulant}{\left(\right.}{\left.\right)}
\renewcommand{\c}{\kappa_\nu\Kumulant}
\let\oldk\k
\renewcommand{\k}{\kappa\Kumulant}
\newcommand{\Ph}[1]{{\mathcal{T}} \left( {#1} \right) }
\newcommand{\Pht}[1]{{\mathcal{F}} \left( {#1} \right) }
\newcommand{\Phh}[1]{\hat{\mathcal{F}} \left( {#1} \right) }
\newcommand{\Pto}[1]{\overline{\mathcal{T}} \left( {#1} \right) }
\newcommand{\Pho}[1]{\overline{\mathcal{F}} \left( {#1} \right) }
\newcommand{\Pp}[1]{\mathcal{P} \left( {#1} \right) }
\newcommand{\Po}[1]{\overline{\mathcal{P}} \left( {#1} \right) }
\newcommand{\Poo}[1]{\hat{\mathcal{P}} \left( {#1} \right) }
\newcommand{\Pt}[1]{\mathcal{N} \left( {#1} \right) }
\begin{document}
\maketitle

\newtheorem{tw}{Theorem}[section]
\newtheorem{prop}[tw]{Proposition}
\newtheorem{cor}[tw]{Corollary}
\newtheorem{lem}[tw]{Lemma}
\newtheorem{con}[tw]{Conjecture}
\newtheorem{ob}[tw]{Observation}

\theoremstyle{remark}
\newtheorem{ex}[tw]{Example}
\newtheorem{rem}[tw]{Remark}
\newtheorem{de}[tw]{Definition}

\begin{abstract}
Cumulants are a notion that comes from the classical probability theory, they are an alternative to a notion of moments. 
We adapt the probabilistic concept of cumulants to the setup of a linear space equipped with two multiplication structures.
We present an algebraic formula which involves those two multiplications as a sum of products of cumulants.
In our approach, beside cumulants, we make use of standard combinatorial tools as 
forests and their colourings. We also show that the resulting statement can be understood as an analogue of Leonov--Shiraev's formula. 
This purely combinatorial presentation leads to some conclusions about structure constant of Jack characters.
\end{abstract}

\section{Introduction}

\subsection{Cumulants in probability theory.}
\label{cumulants}

One of classical problems in probability theory is to describe
the joint distribution of a family $\left( X_i \right)$ of random variables in the most convenient
way. Common solution of this problem is to use the family of moments, \emph{i.e.}~the expected values of products of the form
\begin{equation*}
\mathbb{E} \left(  X_{i_1} \cdots X_{i_l} \right) .
\end{equation*}
It has been observed that in many problems it is more convenient to make use of the \emph{cumulants} \cite{Hal81, Fis28},
defined as the coefficients of the expansion of the logarithm of the multidimensional
Laplace transform around zero:
\begin{equation}
\label{cum}
\begin{array}{lrl}

\kappa \left( X_1,\ldots, X_n \right)  &:=& 
\left[ t_1 \cdots t_n \right]  \log \mathbb{E} e^{ t_1 X_1 +\cdots +t_n X_n}  \\
&=&
\dfrac{\partial^n}{\partial t_1 \cdots \partial t_n} \log \mathbb{E} e^{ t_1 X_1 +\cdots +t_n X_n} 
{\Bigg|}_{t_1 = \ldots = t_n =0} ,
\end{array}
\end{equation}

\noindent
where the terms on the right-hand side should be understood as a formal power series in the variables $t_1, \ldots , t_n$. Cumulant is a linear map with respect to each of its arguments.

There are some good reasons for claiming advantage of cumulants over the moments. 
One of them is that the convolution of measures corresponds to the product of the
Laplace transforms or, in other words, to the sum of the logarithms of the
Laplace transforms. It follows that the cumulants behave in a very simple
way with respect to the convolution, namely cumulants linearize the convolution.

Cumulants allow also a combinatorial description. One can show that the
expression \eqref{cum} is equivalent to the following system of equations, called \emph{the moment-cumulant formula}:
\begin{equation}
\label{cumulant}
\mathbb{E} \left( X_1 \cdots X_n \right) =
\sum_\nu \prod_{b \in \nu} \kappa\left( X_i : i \in b \right) 
\end{equation}
\noindent
which should hold for any choice of the random variables $X_1, \ldots , X_n$
whose moments are all finite. The above sum runs over the set partitions
$\nu$ of the set $\left[ n\right] = \lbrace 1,\ldots , n \rbrace$ and the product runs over the blocks of the partition $\nu$. 

\begin{ex}
For three random variables the corresponding moment expands as follows:
\begin{equation*}
\begin{array}{rl}
\mathbb{E}\left( X_{1}X_{2}X_{3}\right)
       =  &\k{X_{1}} \cdot  \k{X_{2}} \cdot  \k{X_{3}}   +        
           \k{X_{1}, X_2} \cdot  \k{X_{3}}                    \\
          &+\k{X_{2}, X_{3}}  \cdot  \k{X_{1}}            +       
           \k{X_{1}, X_{3}} \cdot  \k{X_{2}}                   \\
          & +\k{X_{1}, X_{2}, X_{3}} .                   
\end{array}
\end{equation*}
\end{ex}

\noindent
Observe that the moment-cumulant formula 
defines the cumulant $\k{X_1,\ldots ,X_n}$ 
inductively according to the number of arguments $n$.

\subsection{Conditional cumulants.}
\label{u}

Let $\mathcal{A}$ and $\mathcal{B}$ be commutative unital algebras
and let $\mathbb{E}: \mathcal{A}\longrightarrow \mathcal{B}$ be a unital linear map. We say that $\mathbb{E}$ is a
conditional expected value. 
For any tuple $x_1,\ldots  , x_n \in \mathcal{A}$ we define their \emph{conditional cumulant} as

\begin{equation}
\label{cum2}
\begin{array}{lcl}

\kappa \left( x_1,\ldots, x_n \right)  &=& 
\left[ t_1 \cdots t_n \right]  \log \mathbb{E} e^{ t_1 x_1 +\cdots +t_n x_n}  \\
&=&
\dfrac{\partial^n}{\partial t_1 \cdots \partial t_n} \log \mathbb{E} e^{ t_1 x_1 +\cdots +t_n x_n} 
{\Bigg|}_{t_1 = \ldots = t_n =0} \in \mathcal{B}
\end{array}
\end{equation}

\noindent
where the terms on the right-hand side should be understood as in Eq.~\eqref{cum}.
In this general approach, cumulants give a way of measuring the discrepancy between the algebraic structures of $\mathcal{A}$ and $\mathcal{B}$.

\subsection{Framework.}
In the this paper we are interested in a following particular case. We assume that $\mathcal{A}$ is a linear space equipped with two \emph{commutative multiplication} structures, which correspond to two products: $\cdot$ and $\ast$. Together with each multiplication $\mathcal{A}$ form the commutative algebra. We call such structure an \emph{algebra with two multiplications}. We also assume that the mapping $\mathbb{E}$ is the identity map on $\mathcal{A}$:
\begin{equation*}
\mathbb{E} : \left( \mathcal{A}, \cdot \right) \stackrel{\id}{\longrightarrow} \left( \mathcal{A}, \ast \right) .
\end{equation*}
\noindent
In this case the cumulants measure the discrepancy between these two multiplication structures on $\mathcal{A}$. This situation arises naturally in many branches of algebraic combinatorics, for example in the case of Macdonald cumulants \cite{Dolega2017,Dolega2016b} and cumulants of Jack characters \cite{DolegaFeray2016, Sniady2016}. 

Since the mapping $\mathbb{E}$ is the identity, we can define cumulants of cumulants and further compositions of them. The terminology of cumulants of cumulants was introduced in \cite{MR725217} and further developed in \cite{Lehner2013} (called there \emph{nested cumulants}) in a slightly different situation of an inclusion of algebras 
$\mathcal{C}\subseteq\mathcal{B}\subseteq \mathcal{A}$ and conditional expectations 
$\mathcal{A}\stackrel{\mathbb{E}_1}{\longrightarrow}\mathcal{B}\stackrel{\mathbb{E}_2}{\longrightarrow} \mathcal{C}$.

As we already mentioned in \cref{cumulants}, cumulants allow also a combinatorial description via the moment-cumulant formula. 
When $\mathbb{E}$ is the identity map \cref{cum2} is equivalent to the following system of equations:
\begin{equation}
\label{cumulant2}
 a_1 \ast\cdots\ast a_n  =
\sum_\nu \prod_{b \in \nu} \kappa\left( a_i : i\in b \right)  ,
\end{equation}

\noindent
for any $a_i \in \mathcal{A}$ (the product on the right-hand side is the $\cdot$-product). The above sum runs over the set partitions
$\nu$ of the set $\left[ n\right] $ and the product runs over the blocks of the partition $\nu$. 

Let $A$ be a multiset consisting of elements of the algebra $\mathcal{A}$. To simplify notation, for any partition $\nu$ of a multiset $A $ we introduce the corresponding cumulant $\kappa_\nu$ as the product:
\begin{equation*}
\kappa_\nu = \prod_{b \in \nu} \kappa \left( a: a\in b\right) .
\end{equation*}
\noindent
We denote by $\Pp{A}$ the set of all partitions of $A$. With this notation, the moment-cumulant formula has the following form:
\begin{equation}
\label{cumulant3}
\mathop{\mathlarger{\mathlarger{\mathlarger{\mathlarger{\mathlarger{\ast}}}}}}\limits_{a\in A} a =
\sum_{\nu \in \Pp{A}}  \kappa_\nu .
\end{equation}

\begin{ex}
Given three elements $a_{1} ,a_{2} ,a_{3} \in \mathcal{A}$, we have:
\begin{equation*}
\begin{array}{rcccc}
 a_{1}\ast a_{2}\ast a_{3}
       =  &\k{a_{1}} \cdot  \k{a_{2}} \cdot  \k{a_{3}}   &+        & 
           \k{a_{1}, a_2} \cdot  \k{a_{3}}             &+        \\
          &\k{a_{2}, a_{3}}  \cdot  \k{a_{1}}            &+        &
           \k{a_{1}, a_{3}} \cdot  \k{a_{2}}            &+        \\
          & \k{a_{1}, a_{2}, a_{3}} .                    &         &                                                                 &
\end{array}
\end{equation*}
\end{ex}

\subsection{The main result.}
\label{main subsection}

The purpose of this paper is to present an algebraic formula which involves two multiplications on linear space $\mathcal{A}$:
\begin{equation*}
\left( a^1_1 \ast \cdots \ast a_{k_1}^1 \right)  \cdots \left(  a^n_1 \ast \cdots \ast a_{k_n}^n \right) ,
\end{equation*}
\noindent
as a sum of products of only one type of multiplication.

We use the following notation. 
We denote by $A_1 , \ldots ,A_n$ multisets consisting of elements of $\mathcal{A}$. We denote by $ A = A_1 \cup \cdots \cup A_n$ the multiset, corresponding to the sum of all multisets $A_i$.
We use also the following notation for elements of $A_i$: 
\begin{equation*}
A_i =\left\lbrace   a_1^i ,\ldots ,a_{k_i}^i \right\rbrace  ,
\end{equation*}
\noindent
hence the multiset $A$ consists of the following elements: 
\begin{equation*}
A =\left\lbrace   a_1^1 ,\ldots ,a_{k_1}^1 , \ldots ,  a_1^n ,\ldots ,a_{k_n}^n \right\rbrace   .
\end{equation*}
 
Due to a combinatorial nature of this result we introduce now the definitions of the mixing reduced forests and theirs cumulants. We begin with the following definition.

\begin{de}
\label{root}
Consider a forest $F$ whose leaves are labelled by elements of an algebra $\mathcal{A}$. 
We denote by $A$ the multiset consisting of labels of all leaves.
If each node (vertex which is not a leaf) of $F$, has at least two descendants, we call $F$ a \textit{reduced forest} with leaves in $A$. We denote the set of such forests by $\Pht{A}$ (see \cref{fig0}).
\end{de}

For a reduced forest $F \in \Pht{A}$ we associate a cumulant $\kappa_F$ in the following way:

\begin{de}
\label{xxx}
Consider a reduced forest $F \in \Pht{A}$. Denote by $a_v $ the label of a leaf $v$.
For any vertex $v \in F$ we define inductively the quantities $\kappa_v$ as follows: 
\begin{equation*}
\kappa_v := 
\left\{ \begin{array}{lllll}
a_v & &&                            &$if $v$ is a leaf$,\\
\k{ \kappa_{v_1}, \ldots, \kappa_{v_n}} & &&   &$otherwise,$
\end{array} \right.
\end{equation*}

\noindent
where $v_1 , \ldots, v_n$ are the descendants of $v$. For the whole forest $F$, we define the cumulant $\kappa_F$ to be the product:
\begin{equation*}
\begin{array}{l}
\kappa_F^{} := \mathop{\ast}\limits_{i}^{} \kappa_{V_i}^{} ,
\end{array}
\end{equation*} 
\noindent
where  $V_i$ are the roots of all trees in $F$ (see \cref{fig0}).
\end{de}

Finally, we introduce a class of the \textit{mixing forests} and the associated quantity $w_F$.

\begin{restatable}{de}{definitionw}
\label{kT}
Let us consider a multiset $A = A_1 \cup \cdots \cup A_n$ and a reduced forest $F \in \Pht{A}$. We say that $F$ is \textit{mixing for a division} $A_1 ,\ldots,A_n$ (or shortly \textit{mixing}) if for each vertex $v$ whose descendants are all leaves, those descendants are elements of at least two distinct multisets $A_i$ and $A_j$. 
Denote by $\overline{\mathcal{F}}(A)$ the set of all \emph{reduced mixing forests}. 

For a reduced mixing forest $F$ we define the quantity $w_F$ to be the number of vertices in $F$ minus the number of leaves (see \cref{fig0}).
\end{restatable}

\begin{figure}
\centering
\begin{tikzpicture}[scale=0.8]

\begin{scope}[shift={(12,-6)}]
\draw[gray, thick] (0,0) -- (0.5,-1);
\draw[gray, thick] (1,0) -- (0.5,-1);

\draw[gray, thick] (0.5,-1) -- (1.25,-2);
\draw[gray, thick] (2.5,0) -- (1.25,-2);

\filldraw[color=black, fill=gray] (0,0) circle (2pt) node[anchor=south] {$a_1^1$};
\filldraw[color=black, fill=gray] (1,0) circle (2pt) node[anchor=south] {$a_2^1$};
\filldraw[color=black, fill=white] (2.5,0) circle (2pt) node[anchor=south] {$a_1^2$};

\filldraw[black] (0.5,-1) circle (2pt);

\filldraw[black] (1.25,-2) circle (2pt);
\draw[->,line width=1pt, blue] (1.25,-2.5) to (1.25,-2.1);

\filldraw[black] (-0.5,-3)  node[anchor=west] {$\kappa \Big( \k{a_1^1,a_2^1} ,a_1^2 \Big)$};
\filldraw[black] (0.5,-3.75)  node[anchor=west] {};
\end{scope}

\begin{scope}[shift={(4,-6)}]
\draw[gray, thick] (6.5,0) -- (5.75,-1);
\draw[gray, thick] (5,0) -- (5.75,-1);

\draw[gray, thick] (5.75,-1) -- (5.25,-2);
\draw[gray, thick] (4,0) -- (5.25,-2);

\filldraw[color=black, fill=gray] (4,0) circle (2pt) node[anchor=south] {$a_1^1$};
\filldraw[color=black, fill=gray] (5,0) circle (2pt) node[anchor=south] {$a_2^1$};
\filldraw[color=black, fill=white] (6.5,0) circle (2pt) node[anchor=south] {$a_1^2$};

\filldraw[black] (5.75,-1) circle (2pt);

\filldraw[black] (5.25,-2) circle (2pt);
\draw[->,line width=1pt, blue] (5.25,-2.5) to (5.25,-2.1);

\filldraw[black] (3.5,-3)  node[anchor=west] {$\kappa \Big( a_1^1, \k{a_2^1 ,a_1^2} \Big)$};
\filldraw[black] (4.5,-3.75)  node[anchor=west] {$w_F =2$};
\end{scope}

\begin{scope}[shift={(-4,-6)}]
\draw[gray, thick] (8,0) -- (9.5,-1);
\draw[gray, thick] (10.5,0) -- (9.5,-1);

\draw[gray, thick] (9.5,-1) -- (9.25,-2);
\draw[gray, thick] (9,0) -- (9.25,-2);

\filldraw[color=black, fill=gray] (8,0) circle (2pt) node[anchor=south] {$a_1^1$};
\filldraw[color=black, fill=gray] (9,0) circle (2pt) node[anchor=south] {$a_2^1$};
\filldraw[color=black, fill=white] (10.5,0) circle (2pt) node[anchor=south] {$a_1^2$};

\filldraw[black] (9.5,-1) circle (2pt);

\filldraw[black] (9.25,-2) circle (2pt);
\draw[->,line width=1pt, blue] (9.25,-2.5) to (9.25,-2.1);

\filldraw[black] (7.5,-3)  node[anchor=west] {$\kappa \Big( a_2^1, \k{a_1^1 ,a_1^2} \Big)$};
\filldraw[black] (8.5,-3.75)  node[anchor=west] {$w_F =2$};
\end{scope}

\begin{scope}[shift={(-12,-6)}]
\draw[gray, thick] (12,0) -- (13.25,-1);
\draw[gray, thick] (13,0) -- (13.25,-1);
\draw[gray, thick] (14.5,0) -- (13.25,-1);

\filldraw[color=black, fill=gray] (12,0) circle (2pt) node[anchor=south] {$a_1^1$};
\filldraw[color=black, fill=gray] (13,0) circle (2pt) node[anchor=south] {$a_2^1$};
\filldraw[color=black, fill=white] (14.5,0) circle (2pt) node[anchor=south] {$a_1^2$};

\filldraw[black] (13.25,-1) circle (2pt);
\draw[->,line width=1pt, blue] (13.25,-1.5) to (13.25,-1.1);

\filldraw[black] (12,-3)  node[anchor=west] {$ \k{a_1^1,a_2^1 ,a_1^2} $};
\filldraw[black] (12.5,-3.75)  node[anchor=west] {$w_F =1$};
\end{scope}

\begin{scope}[shift={(12,5)}]
\draw[gray, thick] (0,-6) -- (0.5,-7);
\draw[gray, thick] (1,-6) -- (0.5,-7);

\filldraw[color=black, fill=gray] (0,-6) circle (2pt) node[anchor=south] {$a_1^1$};
\filldraw[color=black, fill=gray] (1,-6) circle (2pt) node[anchor=south] {$a_2^1$};
\filldraw[color=black, fill=white] (2.5,-6) circle (2pt) node[anchor=south] {$a_1^2$};
\draw[->,line width=1pt, blue] (2.5,-6.5) to (2.5,-6.1);

\filldraw[black] (0.5,-7) circle (2pt);
\draw[->,line width=1pt, blue] (0.5,-7.5) to (0.5,-7.1);

\filldraw[black] (0,-8)  node[anchor=west] {$ \k{a_1^1,a_2^1} \ast a_1^2 $};
\filldraw[black] (0.5,-8.75)  node[anchor=west] {};
\end{scope}

\begin{scope}[shift={(4,5)}]
\draw[gray, thick] (6.5,-6) -- (5.75,-7);
\draw[gray, thick] (5,-6) -- (5.75,-7);

\filldraw[color=black, fill=gray] (4,-6) circle (2pt) node[anchor=south] {$a_1^1$};
\draw[->,line width=1pt, blue] (4,-6.5) to (4,-6.1);
\filldraw[color=black, fill=gray] (5,-6) circle (2pt) node[anchor=south] {$a_2^1$};
\filldraw[color=black, fill=white] (6.5,-6) circle (2pt) node[anchor=south] {$a_1^2$};

\filldraw[black] (5.75,-7) circle (2pt);
\draw[->,line width=1pt, blue] (5.75,-7.5) to (5.75,-7.1);

\filldraw[black] (4,-8)  node[anchor=west] {$ a_1^1 \ast \k{a_2^1 , a_1^2} $};
\filldraw[black] (4.5,-8.75)  node[anchor=west] {$w_F =1$};
\end{scope}

\begin{scope}[shift={(-4,5)}]
\draw[gray, thick] (8,-6) -- (9.5,-7);
\draw[gray, thick] (10.5,-6) -- (9.5,-7);

\filldraw[color=black, fill=gray] (8,-6) circle (2pt) node[anchor=south] {$a_1^1$};
\filldraw[color=black, fill=gray] (9,-6) circle (2pt) node[anchor=south] {$a_2^1$};
\draw[->,line width=1pt, blue] (9,-6.5) to (9,-6.1);
\filldraw[color=black, fill=white] (10.5,-6) circle (2pt) node[anchor=south] {$a_1^2$};

\filldraw[black] (9.5,-7) circle (2pt);
\draw[->,line width=1pt, blue] (9.5,-7.5) to (9.5,-7.1);

\filldraw[black] (8,-8)  node[anchor=west] {$a_2^1 \ast \k{a_1^1 ,a_1^2} $};
\filldraw[black] (8.5,-8.75)  node[anchor=west] {$w_F =1$};
\end{scope}

\begin{scope}[shift={(-12,5)}]
\filldraw[color=black, fill=gray] (12,-6) circle (2pt) node[anchor=south] {$a_1^1$};
\draw[->,line width=1pt, blue] (12,-6.5) to (12,-6.1);
\filldraw[color=black, fill=gray] (13,-6) circle (2pt) node[anchor=south] {$a_2^1$};
\draw[->,line width=1pt, blue] (13,-6.5) to (13,-6.1);
\filldraw[color=black, fill=white] (14.5,-6) circle (2pt) node[anchor=south] {$a_1^2$};
\draw[->,line width=1pt, blue] (14.5,-6.5) to (14.5,-6.1);

\filldraw[black] (12,-8)  node[anchor=west] {$a_1^1 \ast a_2^1 \ast a_1^2 $};
\filldraw[black] (12.5,-8.75)  node[anchor=west] {$w_F =0$};
\end{scope}

\draw[-,line width=1pt, black] (0,-4.5) to (14.5,-4.5);
\filldraw[black] (10.4,-4.2)  node[anchor=west] {Not trees};
\filldraw[black] (10.7,-4.8)  node[anchor=west] {Trees};
\draw[-,line width=1pt, black] (11.5,0) to (11.5,-3.9);
\draw[-,line width=1pt, black] (11.5,-5.1) to (11.5,-10);

\end{tikzpicture}
\caption{All reduced forests on $A =A_1 \cup A_2 = \{ a_1^1 , a_2^1  , a_1^2   \} $. 
Six of them (on the right-hand side) are mixing; we present theirs $w_F$ numbers. The remaining two elements (shown on the left-hand side) are not mixing. 
We also present the corresponding cumulants~$\kappa_F$. 
Observe that, among all reduced forests $F\in \Pht{A}$, exactly half, presented on top, consists of a single tree 
(see \cref{rem}). } \label{fig0}
\end{figure}
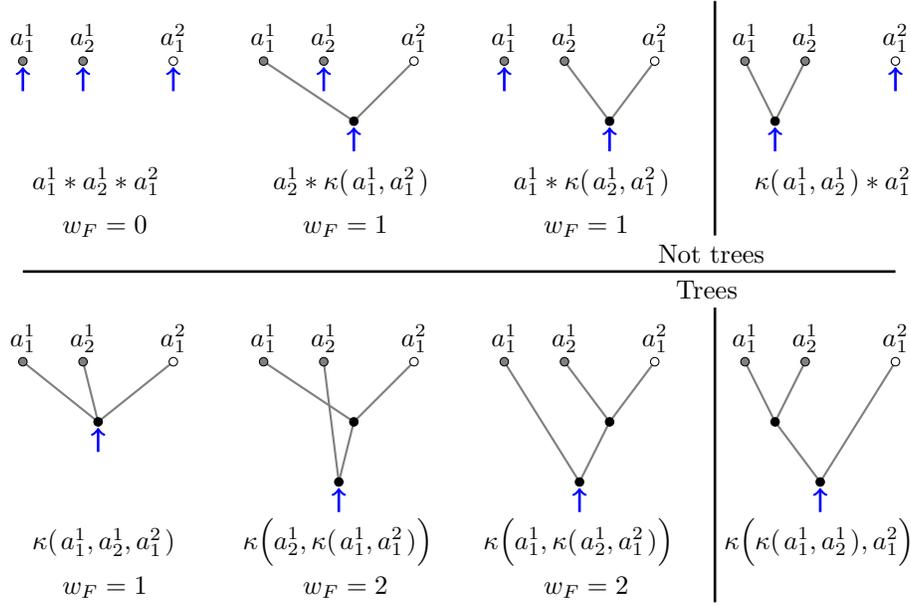

We are ready to formulate the main result of this paper.

\begin{tw}[The main result]
\label{pierwszetwierdzenie}
Let $A_1, \ldots , A_n $ be multisets consisting of elements of $\mathcal{A}$. Let $A$ be the sum of those multisets. Then:
\begin{equation*}
\left( a^1_1 \ast \cdots \ast a_{k_1}^1 \right)  \cdots \left(  a^n_1 \ast \cdots \ast a_{k_n}^n \right) 
= \sum_{F \in \Pho{A}} \left( -1\right)^{w_F^{}} \kappa_F^{} .
\end{equation*}
\end{tw}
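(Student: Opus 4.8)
The plan is to prove the identity in two movements: first rewrite the left-hand side purely in terms of cumulants, and then show that the signed sum over mixing forests collapses to the same expression. First I would expand each factor $a_1^i \ast \cdots \ast a_{k_i}^i$ by the moment--cumulant formula \eqref{cumulant2}. Since the $n$ factors are multiplied by $\cdot$, distributing the products turns the left-hand side into a single sum $\sum_{\nu} \kappa_\nu$ ranging over all set partitions $\nu \in \Pp{A}$ each of whose blocks is contained in one of the multisets $A_i$ (equivalently, $\nu$ refines the division $\{A_1,\dots,A_n\}$), where $\kappa_\nu$ is the $\cdot$-product of block cumulants as in \eqref{cumulant3}. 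Thus \cref{pierwszetwierdzenie} becomes the assertion that $\sum_{F \in \Pho{A}} (-1)^{w_F} \kappa_F$ equals this sum over division-refining partitions.

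Next I would organise the forest sum recursively along the structure exposed in \cref{fig0}. Every reduced forest is determined by (i) the partition of $A$ into the leaf-sets of its trees, whose root cumulants are combined by $\ast$ in $\kappa_F$, and (ii) for each tree, the partition of its leaf-set induced by the children of the root, combined by a single outer cumulant $\kappa$, recursing into each child subtree. Writing $\overline{t}(B)$ for the signed sum $\sum (-1)^{w_\tau}\kappa_\tau$ over mixing trees $\tau$ on a sub-multiset $B$, this yields $\sum_{F \in \Pho{A}}(-1)^{w_F}\kappa_F = \sum_{\pi \in \Pp{A}} \ast_{B \in \pi}\, \overline{t}(B)$ together with the recursion $\overline{t}(B) = -\sum_{\sigma}\kappa\bigl(\overline{t}(C) : C \in \sigma\bigr)$ (valid once $B$ meets at least two of the $A_i$), the inner sum running over partitions $\sigma$ of $B$ into at least two parts, and $\overline{t}(\{a\}) = a$. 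The decisive structural observation is that $\overline{t}(B) = 0$ whenever $B$ lies inside a single $A_i$ and $|B|\ge 2$: any tree on such a $B$ has a deepest internal vertex, all of whose children are leaves from the single multiset $A_i$, so it is non-mixing and contributes nothing.

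Then I would prove by induction on $|A|$ that this recursion telescopes to $\prod_i^\cdot (\ast_{a\in A_i}\, a)$. The two displayed relations are exactly a pair of moment--cumulant formulas with the roles of $\cdot$ and $\ast$ interchanged: the outer $\ast$-sum over $\pi$ is the dual moment--cumulant expansion, while the sign $-1$ in the recursion for $\overline{t}$ realises the passage between the two multiplicative structures (the cumulants of cumulants discussed in \cref{u}). Concretely, $\overline{t}$ satisfies the defining recursion of the cumulant attached to the reversed identity map $(\mathcal{A},\ast)\to(\mathcal{A},\cdot)$, with the vanishing above suppressing exactly the monochromatic blocks; feeding this into the outer $\ast$-sum reconstructs the dual moment, which, because the surviving blocks keep the elements of each $A_i$ grouped, is precisely $\prod_i^\cdot (\ast_{a\in A_i}\, a)$, matching the first movement.

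I expect the main obstacle to be the simultaneous control of the two multiplications, the nested cumulants, and the mixing condition in the inductive step: one must verify both that $\overline{t}$ coincides with the dual cumulant and that the mixing restriction cancels every monochromatic contribution, so that the collapse leaves each division-refining term with coefficient exactly $+1$. A useful alternative for this step, suggested by the observation that exactly half of the reduced forests are single trees, is a sign-reversing involution pairing a tree with the forest $G$ obtained by deleting its root: the paired contribution $\kappa_G - \kappa_{G^+}$ simplifies through the moment--cumulant formula, and the only place the pairing fails---the all-singleton forests together with the single flat cumulant on all of $A$---is governed precisely by whether $A$ meets two distinct multisets, that is, by the mixing condition.
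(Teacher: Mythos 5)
Your first three steps are sound and in fact reproduce, in a different order, material the paper itself records: the expansion of the left-hand side over division-refining partitions is the easy half of \eqref{sumakumulant}, the decomposition of the forest sum into a sum over partitions $\pi$ of $\ast$-products of tree sums $\overline{t}(B)$ is exactly \cref{long}, and the vanishing of $\overline{t}(B)$ on blocks contained in a single $A_i$ is correct. The gap is in your fourth step, which is where the whole content of the theorem lives, and the mechanism you propose there is false. The quantity $\overline{t}(B)$ does \emph{not} satisfy the defining recursion of the dual cumulant $\kappa^\ast$: your (correct) recursion for $\overline{t}$ applies a single outer \emph{forward} cumulant $\kappa$ to the blocks of $\sigma$, whereas the dual moment--cumulant formula combines blocks by a $\ast$-product, and these recursions have different solutions. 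Concretely, take $A_1=\{a_1,a_2\}$, $A_2=\{b\}$. If one had $\overline{t}(B)=\kappa^\ast(a:a\in B)$ for non-monochromatic $B$ and $0$ for monochromatic ones, your outer sum $\sum_\pi\ast_{B\in\pi}\overline{t}(B)$ would equal the full dual moment $a_1\cdot a_2\cdot b$ minus the one suppressed term $\kappa^\ast(a_1,a_2)\ast b$, while steps one to three force it to equal $(a_1\ast a_2)\cdot b=a_1\cdot a_2\cdot b-\kappa^\ast(a_1,a_2)\cdot b$; so your identification is consistent only if $\kappa^\ast(a_1,a_2)\ast b=\kappa^\ast(a_1,a_2)\cdot b$, which fails in a general algebra with two multiplications. (The correct value is $\overline{t}(\{a_1,a_2,b\})=\kappa^\ast(a_1,a_2,b)+\kappa^\ast(a_1,a_2)\ast b-\kappa^\ast(a_1,a_2)\cdot b$.) The telescoping therefore does not close as described, and the final assertion that the ``dual moment'' with monochromatic blocks removed equals $\prod_i\bigl(\ast_{a\in A_i}a\bigr)$ is likewise unsupported.

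Your fallback involution (pairing a tree with the forest obtained by deleting its root) does not close either: the paired contribution is $(-1)^{w_G}\left(\kappa_G-\kappa_{G^+}\right)$, and by the moment--cumulant formula this difference is a sum of $\cdot$-products of cumulants over partitions with at least two blocks --- terms that are not of the form $\kappa_F$ for any forest $F$ and must themselves be re-expanded, so no two-term cancellation occurs. That forced re-expansion is precisely what the paper formalises: it iterates the rewriting \eqref{procedure}, records the iteration history as nested upward sequences of partitions, bijects these with reduced forests carrying gap-free weakly-mixing colourings (\cref{przedostatnie1}), and then evaluates the signed sum of colourings of a fixed forest by a separate lattice-path argument, obtaining $(-1)^{w_F}$ for mixing forests and $0$ otherwise (\cref{przedostatnie2}). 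Some substitute for that second, sign-counting step is exactly what your proposal is missing.
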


\begin{ex}
\label{commuatative}
\cref{fig0} presents all reduced forests $F$ on the multiset $A =\{ a_1^1 , a_2^1  , a_1^2    \}$. 
Six of them are mixing. 
By the statement of the theorem, we have
\begin{equation*}
\begin{array}{lcll}
\left( a_1^1 \ast a_2^1\right) \cdot a_1^2&=& a_1^1 \ast a_2^1\ast a_1^2 - \k{a_1^1 , a_1^2}\ast\k{a_2^1}\\
&& - \k{a_2^1 , a_1^2} \ast \k{a_1^1} - \k{a_1^1 , a_2^1 , a_1^2} \\
&& +\kappa \left( \k{a_1^1 , a_2^2},\k{a_2^1}\right) +\kappa \left( \k{a_2^1 , a_2^2},\k{a_1^1}\right) . 
\end{array}
\end{equation*}
\end{ex}

\subsection{Leonov--Shiryaev's formula.}

In 1959 Leonov and Shiryaev \cite[Equation IV.d]{l-s} presented a formula for a cumulant of products of random variables:
\begin{equation*}
\kappa\left( X_{1,1} \cdots X_{k_1,1} , \ldots ,X_{1,n} \cdots X_{k_n,n}\right)  
\end{equation*}
\noindent
in terms of simple cumulants. This formula was first proved by Leonov and Shiryaev \cite{l-s}, a more direct proof was given by Speed \cite{MR725217}. The technique of Leonov and Shiryaev was used in many situations \cite{MR995615, Lehner2002} 
and was further developed in other papers: Krawczyk and Speicher \cite{MR1757277, Mingo2007} found the free analogue of the formula; the formula was further generalized to the partial cumulants \cite[Proposition 10.11]{MR2266879}.

We briefly present the original formula stated by Leonov and Shiryaev in the framework of an algebra with two multiplications. 
We use the same notation for multisets $A_1 , \ldots ,A_n$ and its sum $ A = A_1 \cup \cdots \cup A_n$ as in \cref{main subsection}. 

We introduce a notion of a strongly-mixing partitions (called also \emph{indecomposable} partitions).

\begin{de}
\label{SMP}
Consider a multiset $A =A_1 \cup \cdots \cup A_n$ and any partition $\nu$ of $A$. 
A partition $\lambda =\{\lambda_1 ,\lambda_2 \}$ is called a \emph{row partition} if for each multiset $A_i$ we have: either $A_i\subseteq \lambda_1$ or $A_i\subseteq \lambda_2$.

A partition $\nu = \{\nu_1 ,\ldots, \nu_q \}$ is called a \textit{strongly-mixing partition for the division} $A =A_1 \cup \cdots \cup A_n$ (or shortly \textit{strongly-mixing partition}), if there is no row partition $\lambda$ such that for any $i$ either $\nu_i \in \lambda_1$, or $\nu_i \in \lambda_2$ (see \cref{fig7}).

We denote by $\Poo{A}$ the set of all strongly-mixing partitions of a set $A$.
\end{de}

\begin{figure}
\centering
\begin{tikzpicture}[scale=1]

\filldraw[gray] (1.8,-4.5) circle (0pt)  node[anchor=west,black] 
{$\nu = \Big\{ \{a_1^1 ,a_1^2 \} ,\{a_1^3 ,a_2^2 ,a_2^3 \}, \{ a_1^4,a_1^5 \} ,\{a_2^5 \}  \Big\}$};
\filldraw[gray] (2.15,-5.5) circle (0pt) node[anchor=west,blue] 
{$\lambda =\Big\{ \{a_1^1 ,a_1^2   ,a_2^2 ,a_1^3,a_2^3 \},\{ a_1^4,a_1^5 ,a_2^5 \} \Big\} $};

\begin{scope}[shift={(0,5.5)}]
\draw[thick,rounded corners=21pt,blue,dashed] (8,-5.5) -- (9.5,-5.5) -- (10.5,-7.5) -- (9.5,-9) -- (7,-9) -- (6,-7.5) -- (7,-5.5) -- (8,-5.5);
\draw[thick,rounded corners=21pt,blue,dashed] (4,-5.5) -- (5,-5.5) -- (6,-7.5) -- (5,-9) -- (0.5,-9) -- (-0.5,-7.5) -- (0.5,-5.5) -- (4,-5.5) ;
\end{scope}

\filldraw[black] (7,-1) circle (2pt) node[anchor=west] {$a^4_1$};
\filldraw[black] (9,-1) circle (2pt) node[anchor=west] {$a^5_1$};

\filldraw[black] (9,-2.5) circle (2pt) node[anchor=west] {$a^5_2$};

\begin{scope}[shift={(-0.5,0)}]
\filldraw[black] (1,-1) circle (2pt) node[anchor=west] {$a^1_1$};
\filldraw[black] (3,-1) circle (2pt) node[anchor=west] {$a^2_1$};
\filldraw[black] (5,-1) circle (2pt) node[anchor=west] {$a^3_1$}; 
\filldraw[black] (3,-2.5) circle (2pt) node[anchor=west] {$a^2_2$};
\filldraw[black] (5,-2.5) circle (2pt) node[anchor=west] {$a^3_2$};  
\draw[thick,rounded corners=15pt] (2,-0.25) -- (3.25,-0.25) -- (4,-1) -- (3.25,-1.75) -- (1.25,-1.75) -- (0.5,-1) -- (1.25,-0.25) -- (2,-0.25);

\draw[thick,rounded corners=15pt] (4,-1.5) -- (5.25,-0.25) -- (6,-1) -- (6,-2.5) -- (5.25,-3.25) -- (3.25,-3.25) -- (2.5,-2.5) -- (4,-1.5);
\end{scope}
\draw[thick,rounded corners=15pt] (8,-0.25) -- (9.25,-0.25) -- (10,-1) -- (9.25,-1.75) -- (7.25,-1.75) -- (6.5,-1) -- (7.25,-0.25) -- (8,-0.25);

\draw[thick,rounded corners=15pt] (8.75,-2.5) -- (9.25,-1.75) -- (10,-2.25) -- (9.25,-3.25) -- (8.75,-2.5);

\end{tikzpicture}
\caption{The multiset $A=\{a_1^1 ,a_1^2  ,a_2^2  ,a_1^3 ,a_2^3, a_1^4,a_1^5 ,a_2^5 \}$ and the set partition $\nu$. 
There exists a row partition $\lambda$ (dashed line) such that each part of $\nu$ is contained in one of the parts of $\lambda$. Hence the partition $\nu$ is not strongly-mixing.} \label{fig7}
\end{figure}

We can now express the Leonov--Shiryaev's formula using the notations and notions relevant to the work done in this paper. 

\begin{tw}[Leonov--Shiryaev's formula]
\begin{equation}
\kappa\left( a_1^1 \cdots a_{k_1}^1 , \ldots ,a_1^n \cdots a_{k_n}^n\right) =
\kappa \left( \prod_{j=1}^{k_i}  a_i^j : i \in [n]  \right) =
\sum_{\nu \in \Poo{A} } \kappa_\nu ,
\end{equation}
\noindent
where the sum on the right-hand side is running over all strongly-mixing partitions of a set $A$.
\end{tw}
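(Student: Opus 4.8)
The plan is to reduce the statement to the classical Möbius-inversion argument of Speed \cite{MR725217} on the partition lattice, carried out inside $\mathcal{A}$ and reading the $\ast$-product as the ``moment'' and the $\cdot$-product as the product of cumulants, exactly as in the moment-cumulant formula \eqref{cumulant3}. Throughout I would write $Y_i := a_1^i \ast \cdots \ast a_{k_i}^i$ for the $\ast$-product of the elements of $A_i$, so that the left-hand side is the single cumulant $\kappa\left(Y_1,\ldots,Y_n\right)$, and I would denote by $\pi_0 = \{A_1,\ldots,A_n\}$ the partition of $A$ into the given multisets. I would treat the tokens $a^i_j$ as pairwise distinct, so that $\Pp{A}$ is the usual partition lattice on the index set, ordered by refinement, with maximal element $\hat 1$.

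First I would record the two ingredients. For a partition $\pi$ of $A$ I set $m_\pi := \prod_{B\in\pi}^{\cdot}\left(\mathop{\ast}_{a\in B} a\right)$, the $\cdot$-product over the blocks of $\pi$ of the corresponding $\ast$-products; applying \eqref{cumulant3} inside each block and multiplying with $\cdot$ gives the moment-cumulant relation in its lattice form $m_\pi = \sum_{\sigma \leq \pi} \kappa_\sigma$. The second ingredient is the analogue for the $n$ elements $Y_1,\ldots,Y_n$: since \eqref{cumulant3} is valid for an arbitrary family of elements of $\mathcal{A}$, applying it to the subfamily $(Y_i)_{i\in b}$ inside each block of a partition $\rho \in \Pp{[n]}$ yields $\prod_{b\in\rho}^{\cdot}\left(\mathop{\ast}_{i\in b} Y_i\right) = \sum_{\tau\leq\rho}\prod_{c\in\tau}^{\cdot}\kappa\left(Y_i : i\in c\right)$, a system I may invert.

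Next I would carry out the inversion. Möbius inversion over $\Pp{[n]}$ at $\rho=\hat 1$ gives $\kappa\left(Y_1,\ldots,Y_n\right) = \sum_{\rho\in\Pp{[n]}}\mu\!\left(\rho,\hat 1\right)\prod_{b\in\rho}^{\cdot}\left(\mathop{\ast}_{i\in b} Y_i\right)$. Because $\ast$ is associative and commutative, $\mathop{\ast}_{i\in b} Y_i = \mathop{\ast}_{a\in\bigcup_{i\in b}A_i} a$, so the block product equals $m_{\rho'}$, where $\rho'\in[\pi_0,\hat 1]$ is the partition of $A$ whose blocks are the unions $\bigcup_{i\in b}A_i$. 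The assignment $\rho\mapsto\rho'$ is the standard isomorphism between $\Pp{[n]}$ and the interval $[\pi_0,\hat 1]$ of $\Pp{A}$, and since the Möbius function depends only on the isomorphism type of an interval, $\mu\!\left(\rho,\hat 1\right)=\mu\!\left(\rho',\hat 1\right)$. Substituting $m_{\rho'}=\sum_{\sigma\leq\rho'}\kappa_\sigma$ and interchanging summations produces $\kappa\left(Y_1,\ldots,Y_n\right) = \sum_{\sigma\in\Pp{A}}\kappa_\sigma\sum_{\pi\,:\,\pi_0\vee\sigma\,\leq\,\pi\,\leq\,\hat 1}\mu\!\left(\pi,\hat 1\right)$, and the inner alternating sum of the Möbius function over $[\pi_0\vee\sigma,\hat 1]$ equals $1$ when $\pi_0\vee\sigma=\hat 1$ and $0$ otherwise.

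It remains to identify the surviving index set with $\Poo{A}$. Unwinding \cref{SMP}, a partition $\sigma$ fails to be strongly-mixing precisely when there is a two-block row partition $\lambda\geq\pi_0$ with $\sigma\leq\lambda$, and such a $\lambda$ exists if and only if $\pi_0\vee\sigma$ has at least two blocks, i.e. $\pi_0\vee\sigma\neq\hat 1$ (compare \cref{fig7}); hence the condition $\pi_0\vee\sigma=\hat 1$ isolated above is exactly strong-mixing, and the sum collapses to $\sum_{\sigma\in\Poo{A}}\kappa_\sigma$, as claimed. The lattice combinatorics here is entirely routine once the dictionary is fixed, so I expect the only genuine obstacle to be getting that translation right: checking that in the two-multiplication setting the $\ast$-product plays the role of the moment while $\cdot$ multiplies the cumulants, and handling the bookkeeping when some tokens $a^i_j$ coincide as elements of $\mathcal{A}$. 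The latter is harmless, since every identity above is an identity of expressions in formally distinct symbols which remains valid after specialising equal symbols.
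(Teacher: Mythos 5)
The paper offers no proof of this theorem at all: it is quoted as a known classical result, with the remark that it was first proved by Leonov and Shiryaev \cite{l-s} and that a more direct proof was given by Speed \cite{MR725217}. Your argument is, in substance, exactly Speed's M\"obius-inversion proof transplanted into the two-multiplication setting, and it is correct and complete: both moment--cumulant systems you invoke are instances of \eqref{cumulant3} applied blockwise, the identification of $\Pp{[n]}$ with the interval $[\pi_0,\hat 1]$ of $\Pp{A}$ is standard, the inner sum $\sum_{\pi_0\vee\sigma\le\pi\le\hat 1}\mu(\pi,\hat 1)=[\pi_0\vee\sigma=\hat 1]$ is the defining recurrence of the M\"obius function, and the equivalence of $\pi_0\vee\sigma=\hat 1$ with the nonexistence of a row partition in the sense of \cref{SMP} is immediate. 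What each approach buys is clear: the paper treats the formula as imported background, while you make the section self-contained. One point deserves emphasis, because it is where the statement is most easily gotten wrong: you chose to read the products inside the cumulant as $\ast$-products, $Y_i=a_1^i\ast\cdots\ast a_{k_i}^i$, whereas the display literally writes $\cdot$-products. Your reading is the one that makes the identity true --- since $\kappa$ is defined by ``$\ast$-product of elements equals the sum over partitions of $\cdot$-products of cumulants'', the role of the product of random variables is played by $\ast$; indeed for $A_1=\{a,b\}$, $A_2=\{c\}$ one checks $\kappa(a\ast b,c)=a\ast b\ast c-(a\ast b)\cdot c=\kappa(a,c)\cdot b+\kappa(b,c)\cdot a+\kappa(a,b,c)$, while $\kappa(a\cdot b,c)=(a\cdot b)\ast c-(a\cdot b)\cdot c$ does not agree with $\sum_{\nu\in\Poo{A}}\kappa_\nu$ in general. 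So your proof establishes the correct (intended) statement; just be aware that this reading is a substantive correction of the printed notation, not a cosmetic choice.
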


\begin{ex}
By Leonov--Shiryaev's formula the cumulant $ \kappa \left( a_1^1 \cdot a_2^1 ,  a_1^2\right) $ expresses as follows:
\begin{equation*}
\begin{array}{lcll}
\kappa \left( a_1^1 \cdot a_2^1,a_1^2 \right)&=& +\k{a_1^1 , a_2^2}\cdot\k{a_2^1} +\k{a_2^1 , a_2^2} \cdot \k{a_1^1}  \\
&&  + \k{a_1^1 , a_2^1 , a_1^2}.
\end{array}
\end{equation*}
\end{ex}

\subsection{Analogue of Leonov--Shiryaev's formula.}

Leonov--Shiryaev's formula relates a cumulant of products with some products of cumulants. 
In the situation we are interested in this paper, where the conditional expected value is the identity mapping, 
we can define two types of cumulants. For each of them we have Leonov--Shiryaev's formula. We present now the third formula, which is a mix of those two.

Consider the identity map:
\begin{equation*}
 \mathcal{\left( A, \cdot\right) } \stackrel{\id}{\longrightarrow} \mathcal{\left( A, \ast\right) }
\end{equation*}
\noindent 
between commutative unital algebras $ \left( \mathcal{A}, \cdot\right) $ and $ \left( \mathcal{A}, \ast\right) $. 
Equation \cref{cumulant2} defined cumulants $\kappa$ of the identity mapping. 
Observe that we can also consider the inverse mapping, namely the map:
\begin{equation*}
 \mathcal{\left( A, \ast\right) } \stackrel{\id^{-1}}{\longrightarrow} \mathcal{\left(A, \cdot\right) }.
\end{equation*}
\noindent
This mapping gives us a way to define cumulants (according to \cref{cumulant2}), which we denote by $\kappa^\ast$.

We present below the Leonov--Shiryaev's formula for both mappings mentioned above: 
\begin{equation*}
\kappa \left( \prod_{j=1}^{k_i}  a_i^j : i \in [n]  \right) =
\sum_{\nu \in \Poo{A} } 
\mathop{\mathlarger{\mathlarger{\mathlarger{\mathlarger{\mathlarger{\ast}}}}}}\limits_{b\in \nu}^{}
\kappa_{}^{} \left( a \in b\right) ,
\end{equation*}
\noindent
and
\begin{equation*}
\kappa_{}^\ast \left( 
\mathop{\mathlarger{\mathlarger{\mathlarger{\mathlarger{\mathlarger{\ast}}}}}}\limits_{j=1}^{k_i} a_i^j : i \in [n]  \right) =
\sum_{\nu \in \Poo{A} } 
\prod_{b \in \nu}
\kappa_{}^\ast \left( a \in b\right)  ,
\end{equation*}
\noindent
where the sums in both equalities run over all strongly-mixing partitions of a multiset~$A =\{a_i^j : i\in[n], j\in [k_i] \}$. 
Observe that in each equality the cumulants on each side are of the same type but the multiplications are not. 
In our formula we will mix types of cumulants on both sides but keep the same multiplication.

To present our result we introduce a class of \textit{strongly-mixing forests} $\Phh{A}$.

\begin{de}
\label{SMF}
Let $A_1, \ldots , A_n $ be multisets consisting of elements of $\mathcal{A}$. Consider a reduced mixing forest $F \in \Pho{A}$ consisting of trees $T_1 , \ldots ,T_s$. Denote by $a_v\in A$ the label of a leaf $a\in A$. 
We define a partition ${\nu}_F$ of a set $A$ as follows:
\begin{equation*}
{\nu}_F = \bigg\{  \left\lbrace a_v : v \in T_k \right\rbrace_{k\in [s]}     \bigg\}.
\end{equation*}
\noindent
We say that a mixing reduced forest $F \in \Pht{A}$ is \textit{strongly-mixing} if the partition $\nu_F$ is strongly-mixing partition. We denote the set of such forests by $\Phh{A}$.
\end{de}

\begin{rem}
Observe that the class of all strongly-mixing forests $\Phh{A}$ is a subclass of all mixing forests $\Pho{A}$, which itself is a subclass of all reduced forests $\Pht{A}$, \emph{i.e.}:
\begin{equation*}
\Phh{A} \subset \Pho{A} \subset \Pht{A} .
\end{equation*}
\noindent
This is analogue to the natural order between classes of strongly-mixing partitions $\Poo{A}$, mixing partitions $\Po{A}$ and partitions $\Pp{A}$:
\begin{equation*}
\Poo{A} \subset \Po{A} \subset \Pp{A} .
\end{equation*}
\end{rem}
We can reformulate \cref{pierwszetwierdzenie} as follows.

\begin{tw}[Analogue of Leonov--Shiryaev's formula]
\label{L-S}
Consider an algebra $\mathcal{A}$ with two multiplicative structures $\cdot$ and $\ast$. Denote by $\kappa$ and $\kappa^\ast$ cumulants related to the identity map on $\mathcal{A}$ as we described above. Then the following formula holds:
\begin{equation}
\kappa^\ast \left( \mathop{\mathlarger{\mathlarger{\mathlarger{\mathlarger{\mathlarger{\ast}}}}}}\limits_{j=1}^{k_i}  a_i^j : i \in [n] \right)  =
\sum_{F \in \Phh{A}} \left( -1\right)^{w_F^{}} \kappa_F^{} ,
\end{equation}
\noindent
where $\Phh{A}$ is a set consisting of strongly-mixing reduced forests.
\end{tw}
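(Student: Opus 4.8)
The plan is to deduce \cref{L-S} from the main result \cref{pierwszetwierdzenie} by reorganising its right-hand side according to a decomposition of mixing forests into strongly-mixing pieces, and then to match the outcome against the moment--cumulant recursion for the reversed identity map. Write $b_i$ for the $i$-th $\ast$-product $a_1^i \ast \cdots \ast a_{k_i}^i$ appearing in \cref{pierwszetwierdzenie}, so that its left-hand side is the $\cdot$-product $b_1 \cdots b_n$, while the left-hand side of \cref{L-S} is $\kappa^\ast(b_1, \ldots, b_n)$. For $B \subseteq [n]$ put $A_B := \bigcup_{i\in B} A_i$ and abbreviate the candidate right-hand side of \cref{L-S} for the sub-division $(A_i)_{i\in B}$ by
\begin{equation*}
\Phi(A_B) := \sum_{F \in \Phh{A_B}} (-1)^{w_F}\, \kappa_F .
\end{equation*}
The target is then the identity $\Phi(A) = \kappa^\ast(b_1, \ldots, b_n)$.

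First I would apply the moment--cumulant formula \cref{cumulant2}, now for the inverse map $\left(\mathcal{A}, \ast\right) \to \left(\mathcal{A}, \cdot\right)$ that defines $\kappa^\ast$, to the variables $b_1, \ldots, b_n$; this gives
\begin{equation*}
b_1 \cdots b_n = \sum_{\pi \in \Pp{[n]}} \mathop{\ast}\limits_{B \in \pi} \kappa^\ast\!\left(b_i : i \in B\right).
\end{equation*}
Isolating the one-block partition $\pi = \{[n]\}$ recasts this as the recursion that inductively determines $\kappa^\ast(b_1, \ldots, b_n)$ from the $\cdot$-moment $b_1 \cdots b_n$ and the lower cumulants $\kappa^\ast(b_i : i\in B)$ with $|B| < n$. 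The strategy is to show that $\Phi$ satisfies the very same recursion, whence $\Phi = \kappa^\ast$ by induction on $n$.

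The combinatorial core is a weight-preserving bijection
\begin{equation*}
\Pho{A} \;\cong\; \bigsqcup_{\pi \in \Pp{[n]}} \;\prod_{B \in \pi} \Phh{A_B} .
\end{equation*}
Given $F \in \Pho{A}$, I would form the graph on vertex set $[n]$ that joins $i$ and $j$ whenever a single tree of $F$ bears leaves from both $A_i$ and $A_j$, and let $\pi(F)$ be its partition into connected components. The indices met by one tree form a clique, hence lie in a single component, so each tree lies over a unique block; gathering the trees over a block $B$ yields a subforest $F_B$ whose induced index graph on $B$ is exactly that component, hence connected, so that $F_B$ is strongly-mixing for $(A_i)_{i\in B}$. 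The inverse map sends a family $(F_B)_B$ to its disjoint union. Since the internal vertices of $F$ are partitioned among the $F_B$ and $\kappa_F$ is the $\ast$-product over all roots, this bijection is additive in the relevant statistics: $w_F = \sum_B w_{F_B}$ and $\kappa_F = \mathop{\ast}_{B} \kappa_{F_B}$.

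Substituting the bijection into \cref{pierwszetwierdzenie} rewrites the $\cdot$-moment as
\begin{equation*}
b_1 \cdots b_n = \sum_{F \in \Pho{A}} (-1)^{w_F} \kappa_F = \sum_{\pi \in \Pp{[n]}} \mathop{\ast}\limits_{B \in \pi} \Phi(A_B),
\end{equation*}
which is the same partition-lattice expansion as above, but with $\Phi(A_B)$ in the role of $\kappa^\ast(b_i : i\in B)$. For $n = 1$ the only mixing forest is the family of isolated leaves, giving $\Phi(A) = b_1 = \kappa^\ast(b_1)$; for $n > 1$ the inductive hypothesis yields $\Phi(A_B) = \kappa^\ast(b_i : i\in B)$ for every proper block, so all terms with $\pi \ne \{[n]\}$ coincide, and as both expansions sum to $b_1 \cdots b_n$ the remaining $\pi = \{[n]\}$ terms must agree, i.e.\ $\Phi(A) = \kappa^\ast(b_1, \ldots, b_n)$. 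I expect the main obstacle to be the bijection of the third paragraph — specifically, verifying that being strongly-mixing for a sub-division $(A_i)_{i\in B}$ in the row-partition sense of \cref{SMP} is equivalent to connectivity of the induced index graph (a two-part row partition corresponds exactly to a disconnection of $[n]$), and that the passage to connected components respects both the mixing condition of \cref{kT} and the weight $w_F$; once this is in place, the algebraic step is a routine Möbius-type inversion on $\Pp{[n]}$.
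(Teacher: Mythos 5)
Your proposal is correct and follows essentially the same route as the paper: apply the moment--cumulant recursion for $\kappa^\ast$ to the blocks $b_i$, use \cref{pierwszetwierdzenie}, split the sum over mixing forests into $\ast$-products over strongly-mixing forests indexed by partitions of $[n]$, and conclude by uniqueness of the solution to that recursion. The only difference is that you make explicit, via the connected-components-of-the-index-graph argument, the splitting bijection that the paper merely asserts with ``observe that we can split the summation'' --- a worthwhile addition, and your verification that connectivity matches the row-partition definition of strongly-mixing is sound.
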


\begin{ex}
\cref{fig0} presents all reduced forests on the multiset 
$A =\{ a_1^1 , a_2^1  , a_1^2    \}$. 
Six of them are mixing, five of them are strongly mixing. 
Thus: 
\begin{equation*}
\begin{array}{lcll}
\kappa^* \left( a_1^1 \ast a_2^1 , a_1^2\right) &=& a_1^1 \ast a_2^1\ast a_1^2 - \k{a_1^1 , a_1^2}\ast\k{a_2^1}\\
&& - \k{a_2^1 , a_1^2} \ast \k{a_1^1} - \k{a_1^1 , a_2^1 , a_1^2} \\
&&  +\kappa \left( \k{a_2^1 , a_2^2},\k{a_1^1}\right) . 
\end{array}
\end{equation*}
\end{ex}

\begin{proof}
In \cref{cumulant2} we present the moment cumulant formula for cumulants $\kappa$ related to the map 
$ \mathcal{\left( A, \cdot\right) } \stackrel{\id}{\longrightarrow} \mathcal{\left( A, \ast\right) }$. 
Similar expression for cumulants $\kappa^\ast_{}$ related to the inverse map $\mathcal{\left( A, \ast\right) } \stackrel{\id^{-1}}{\longrightarrow} \mathcal{\left(A, \cdot\right) }$ is of the following form:
\begin{equation*}
 a_1 \cdots a_n  =
\sum_{\nu \in \Pp{[n]}} \mathop{\mathlarger{\mathlarger{\mathlarger{\mathlarger{\mathlarger{\ast}}}}}}\limits^{}_{b\in \nu} \kappa^\ast_{} \left( a_i : i \in b \right)  .
\end{equation*}
\noindent
We express the $\cdot$-product $\left( a^1_1 \ast \cdots \ast a_{k_1}^1 \right)  \cdots \left(  a^n_1 \ast \cdots \ast a_{k_n}^n \right)$ 
via the moment cumulant formula given by the equation above:
\begin{equation}
\label{equal}
\left( a^1_1 \ast \cdots \ast a_{k_1}^1 \right)  \cdots \left(  a^n_1 \ast \cdots \ast a_{k_n}^n \right) = \sum_{\nu \in \Pp{[n]}} \mathop{\mathlarger{\mathlarger{\mathlarger{\mathlarger{\mathlarger{\ast}}}}}}\limits^{}_{b\in \nu} \kappa_{}^\ast \left( 
\mathop{\mathlarger{\mathlarger{\mathlarger{\mathlarger{\mathlarger{\ast}}}}}}\limits_{j=1}^{k_i} a_i^j : i \in [n]  \right) .
\end{equation}

From \cref{pierwszetwierdzenie} we can express the left-hand side of this equation in another way:
\begin{equation*}
\left( a^1_1 \ast \cdots \ast a_{k_1}^1 \right)  \cdots \left(  a^n_1 \ast \cdots \ast a_{k_n}^n \right) 
= \sum_{F \in \Pho{A}} \left( -1\right)^{w_F^{}} \kappa_F^{} .
\end{equation*}
\noindent
Observe that we can split the summation of $\left( -1\right)^{w_F^{}} \kappa_F$ over all mixing reduced forests $F\in \Pht{A}$ into $\ast$-product of summation over all strongly-mixing reduced forests:
\begin{equation*}
\sum_{F \in \Pho{A}} \left( -1\right)^{w_F^{}} \kappa_F^{} = 
\sum_{\nu \in \Pp{[n]}} \mathop{\mathlarger{\mathlarger{\mathlarger{\mathlarger{\mathlarger{\ast}}}}}}\limits^{}_{b\in \nu}  
\left( \sum_{F \in \Phh{A^b}} \left( -1\right)^{w_F^{}} \kappa_F^{} \right) ,
\end{equation*}
\noindent
where, for each partition $\nu$, sets $A^b :=\cup_{i\in b} A_i$ are division of a set $A$.

Observe, that quantities:
\begin{equation*}
\sum_{F \in \Phh{A^b}} \left( -1\right)^{w_F^{}} \kappa_F^{}
\end{equation*}
\noindent
satisfy the system of equations given by the moment cumulant formula \cref{equal}, which has a unique solution. 
This yields the statement of the theorem.
\end{proof}

\begin{rem}
The above equation is still valid when we replace $\kappa$ (which is hidden in $\kappa_F$ terms) with $\kappa^\ast$ and replace $\ast$-products with $\cdot$-products simultaneously.
\end{rem}

\subsection{Approximate factorization property.}
\label{AFP}
In many cases cumulants are quantities of a very small degree.  
The following definition specifies this statement \cite[Definition 1.8]{Sniady2016}.

\begin{de}
Let $\mathcal{A}$ and $\mathcal{B}$ be filtered unital algebras and let $\mathbb{E}: \mathcal{A}\longrightarrow \mathcal{B}$ be a unital linear map. Let $\kappa$ be the corresponding cumulants. We say that $\mathbb{E}$ has \emph{approximate factorization property} if for all choices of $a_1, \ldots , a_l \in \mathcal{A}$ we have that
\begin{equation*}
\deg_\mathcal{B} \kappa \left( a_1, \ldots ,a_l \right) \leq \deg_\mathcal{A} a_1 + \cdots + \deg_\mathcal{A} a_l - 2 \left( l-1 \right)  .
\end{equation*}
\end{de}

\begin{ob}
\label{rest}
Let us go back to the case, when $\mathbb{E}$ is the identity map on algebra $\mathcal{A}$ with two multiplications. 
Suppose that the identity map 
\begin{equation*}
 \left( \mathcal{A}, \cdot \right) \stackrel{\id}{\longrightarrow} \left( \mathcal{A}, \ast \right) ,
\end{equation*}
\noindent
satisfies the approximate factorization property. 

Let $A_1, \ldots A_n $ be multisets consisting of elements of $\mathcal{A}$. Let $A$ be the sum of those multisets. Then for any forest $F \in \Pht{A}$ consisting of $f$ trees, there is the following restriction on the degree of cumulants:
\begin{equation*}
\deg \kappa_F^{} \leq \left( \sum_{a \in A} \deg a \right) - 2 |A|  + 2f,
\end{equation*} 
\noindent
where $|A|$ is the number of elements in $A$.
\end{ob}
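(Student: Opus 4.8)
The plan is to prove the degree bound by induction on the structure of the forest $F$, using the approximate factorization property as the base case at the leaves and propagating the bound up through each tree. The key observation is that $\deg \kappa_F$ is the sum of the degrees of the cumulants $\kappa_{V_i}$ attached to the roots $V_i$ of the constituent trees (since $\kappa_F = \ast_i \kappa_{V_i}$ and degree is subadditive with respect to the $\ast$-product in a filtered algebra), so it suffices to control $\deg \kappa_v$ for every vertex $v$ and then sum over roots.

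First I would set up the right inductive quantity. For a single vertex $v$ of $F$, let $L_v$ denote the number of leaves in the subtree rooted at $v$ and $N_v$ the number of \emph{nodes} (non-leaf vertices) in that subtree. The claim I would prove by induction on the height of $v$ is
\begin{equation*}
\deg \kappa_v \leq \left( \sum_{a : a \text{ is a leaf below } v} \deg a \right) - 2 L_v + 2 N_v + 2 .
\end{equation*}
For a leaf $v$ we have $\kappa_v = a_v$, $L_v = 1$, $N_v = 0$, and the right-hand side reads $\deg a_v - 2 + 0 + 2 = \deg a_v$, so the base case holds with equality. For the inductive step at a node $v$ with descendants $v_1, \ldots, v_m$, I would apply the approximate factorization property to $\kappa_v = \kappa(\kappa_{v_1}, \ldots, \kappa_{v_m})$, giving $\deg \kappa_v \leq \sum_{j=1}^m \deg \kappa_{v_j} - 2(m-1)$, then substitute the inductive bound for each $\deg \kappa_{v_j}$.

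The arithmetic in the inductive step is where I would be most careful, though I expect it to be routine rather than a genuine obstacle. Summing the inductive hypotheses contributes $\sum_j \big( \sum_{\text{leaves below } v_j} \deg a \big)$, which is exactly the total leaf-degree below $v$; it contributes $-2 \sum_j L_{v_j} = -2 L_v$ and $+2 \sum_j N_{v_j} = 2(N_v - 1)$, since the node $v$ itself is counted in $N_v$ but not in any $N_{v_j}$; and it contributes $+2m$ from the constant terms. Adding the $-2(m-1)$ from the factorization property, the terms involving $m$ are $+2m - 2(m-1) = 2$, and combining $2(N_v - 1) + 2 = 2 N_v$ with the leftover constant $+2$ recovers exactly the claimed bound $-2L_v + 2N_v + 2$ at $v$. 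This confirms the inductive step.

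Finally I would assemble the global bound. Summing $\deg \kappa_{V_i}$ over the $f$ roots, the leaf-degree sums combine into $\sum_{a \in A} \deg a$, the $-2 L_{V_i}$ terms sum to $-2|A|$ (each leaf lies below exactly one root), the $2 N_{V_i}$ terms sum to $2 N$ where $N$ is the total number of nodes in $F$, and the constant $+2$ appears once per tree, giving $+2f$. Thus $\deg \kappa_F \leq \big( \sum_{a \in A} \deg a \big) - 2|A| + 2N + 2f$, which is weaker than the stated bound by the nonnegative quantity $2N$; to match the statement exactly I must check that the statement intends the bound $-2|A| + 2f$ and that the $2N$ term should not be present, so the one subtle point to verify is whether the claimed inequality already absorbs the node count — I would resolve this by re-examining the normalization in the definition of $w_F$ and the degree convention, since $w_F = N$ is precisely the node count, and confirm that the intended and correct statement is $\deg \kappa_F \leq \big( \sum_{a \in A} \deg a \big) - 2|A| + 2f$ holding under the convention that non-leaf cumulants do not increase degree beyond what the factorization property dictates.
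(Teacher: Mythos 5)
Your overall strategy---propagate the approximate factorization property up each tree by induction on the vertices and then sum over the roots---is the same as the paper's, but your inductive invariant is not the right one, and as a consequence what you actually prove is only the weaker bound $\deg \kappa_F \leq \big( \sum_{a\in A}\deg a \big) - 2|A| + 2N + 2f$, which you concede at the end without repairing. There are two concrete problems. First, the arithmetic of your inductive step is off: the net constant is $+2m - 2(m-1) = +2$, and you spend that single $+2$ once to convert $2(N_v-1)$ into $2N_v$; there is no ``leftover constant $+2$'' on top of that, so your step really yields $\deg\kappa_v \leq (\sum \deg a) - 2L_v + 2N_v$ rather than $-2L_v+2N_v+2$. (This happens to be stronger than your hypothesis, so the induction survives, but it should have told you the invariant was miscalibrated.) Second, and decisively, the $2N_v$ term should not appear in the invariant at all. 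The quantity that propagates cleanly is
\begin{equation*}
\deg \kappa_v \;\leq\; \Big( \sum_{a\ \text{a leaf below}\ v} \deg a \Big) \;-\; 2 L_v \;+\; 2 ,
\end{equation*}
with no dependence on the number of internal nodes: it holds with equality at a leaf ($\deg a_v - 2 + 2$), and at a node with children $v_1,\dots,v_m$ the $m$ constants $+2$ contributed by the children are exactly absorbed by the $-2(m-1)$ from the factorization property, leaving a single residual $+2$. Equivalently, $\sum_v (m_v - 1)$ over the internal nodes of a tree telescopes to $L-1$ (number of edges minus number of internal nodes), so iterating the factorization property over any tree with $L$ leaves gives $-2(L-1)$ regardless of its internal structure.

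With the corrected invariant, summing over the $f$ roots gives $\sum_{a\in A}\deg a - 2|A| + 2f$ exactly as stated; the extra $2N$ in your final bound is an artifact of your choice of invariant, not of any ``normalization'' or ``degree convention,'' so the speculative resolution in your last paragraph does not close the gap. (For what it is worth, the paper's own write-up contains a sign/offset typo---it displays $-2(n+1)$ where the factorization property gives $-2(n-1)$---but its intended argument is precisely the telescoping described above.)
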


\begin{proof}
We analyse the definition of $\kappa_F$ (\cref{xxx}). For any vertex $v \in F$ we defined the quantities $\kappa_v$. Using the approximate factorization property, observe that:
\begin{equation*}
\deg \kappa_v \leq \sum_{i=1}^n \deg \kappa_{v_i} - 2 \left( n +1\right) .
\end{equation*}
\noindent
Going from the root $r$ to the leaves we obtain:
\begin{equation*}
\deg \kappa_r \leq \sum_{i=1}^{n_r} \deg \kappa_{v_i} - 2 \left( n_r +1\right)  .
\end{equation*}
\noindent
where $n_r$ is the number of leaves in a tree rooted in $r$, and $v_i$ for $i\in [n_r]$ are leaves of this tree. 

The cumulants $\kappa_F$ were defined as follows:
\begin{equation*}
\begin{array}{l}
\kappa_F^{} := \mathop{\ast}\limits_{i}^{} \kappa_{V_i}^{}  ,
\end{array}
\end{equation*} 
\noindent
where  $V_i$ are roots of all trees in $F$, hence $\deg \kappa_F^{} \leq \sum \deg \kappa_{V_i}$. It is now easy to see the statement of this observation.
\end{proof}

\subsection{Application: Jack characters and their structure constants.}
\label{JC}

Jack characters provide dual information about Jack polynomials which are a ‘simple’ version of Macdonald polynomials \cite{Lapointe1995}. 
Connections of Jack polynomials with various fields of mathematics and physics were established (read more in \cite{Sniady2015}). 
Therefore a better understanding of Jack characters might shed some light on Jack polynomials.
It seems that behind Jack characters stands a combinatorics of maps, \emph{i.e.}~graphs on the surfaces \cite{DFS2013}. 

Jack characters $\Ch_\pi$ form a natural family (indexed by partitions $\pi$) of functions on
the set $\mathbb{Y}$ of Young diagrams. 
One can introduce two different multiplicative structures on the linear space spanned by Jack characters. 

The $\ast$-product is given by concatenations of partitions:
\begin{equation*}
\Ch_\pi \ast \Ch_\sigma = \Ch_{\pi \sqcup \sigma} .
\end{equation*}

For any partitions $\pi$ and $\sigma$ one can uniquely express the
pointwise product of the corresponding Jack characters
\begin{equation*}
\Ch_\pi \cdot \Ch_\sigma =\sum_\mu g_{\pi,\sigma}^\mu \Ch_\mu \left( \delta \right) 
\end{equation*}
in the linear basis of Jack characters. The coefficients $g_{\pi,\sigma}^\mu (\delta )\in \mathbb{Q}[\delta]$
in this expansion are called \emph{structure constants}. 
Each of them is a polynomial in the deformation parameter $\delta$, on which Jack characters depend implicitly. 
The existence of such polynomials was proven in \cite{Dolega2014}. 
There are several combinatorial conjectures about structure coefficients \cite{Sniady2016} and some partial results \cite{MR3484758,Bur16}. 
Structure constants are closely related to \emph{structure coefficients} introduced by Goulden and Jackson in \cite{MR1325917}.

Śniady considers an algebra of Jack characters as a graded algebra, with gradation given by the notion of \emph{$\alpha$-polynomial functions} 
\cite{Sniady2015}. 
Jack characters are $\alpha$-polynomial function of the following degrees
\[\deg \Ch_\pi = |\pi|+\ell(\pi) .\]
Śniady gave explicit formulas for the top-degree homogeneous part of Jack characters.
We sketch shortly how we use the result presented in the this paper in order to find the top-degree coefficients of the structure constants below.

Consider two integer partitions $\pi =(\pi_1 ,\ldots ,\pi_n)$ and $\sigma =(\sigma_1 ,\ldots ,\sigma_l)$ and the relevant multiset $A=A_1 \cup A_2$ given by:
\begin{equation*}
\begin{array}{l}
A_1= \left\lbrace \Ch_{\pi_1} ,\ldots ,\Ch_{\pi_n} \right\rbrace ,\\
A_2= \left\lbrace \Ch_{\sigma_1} ,\ldots ,\Ch_{\sigma_{l}} \right\rbrace .
\end{array}
\end{equation*}
\noindent
Together with the $\cdot$-product and the $\ast$-product described above, the linear space spanned by Jack characters becomes an algebra with two multiplications. 
We can introduce cumulants as a way of measuring the discrepancy between those two types of multiplications via \cref{cumulant2}. 
Recently the approximation factorisation property of cumulants was proven \cite{Sniady2016}.

\begin{lem}[reformulation of the main result]
\label{long}
Let $A_1, \ldots , A_n $ be multisets consisting of elements of $\mathcal{A}$. Let $A$ be the sum of those multisets. Then:
\begin{equation*} 
\left( a^1_1 \ast \cdots \ast a_{k_1}^1 \right)  \cdots \left(  a^n_1 \ast \cdots \ast a_{k_n}^n \right) =
\sum_{\nu \in \mathcal{P}(A)}
\mathop{\mathlarger{\mathlarger{\mathlarger{\mathlarger{\mathlarger{\ast}}}}}}\limits_{i=1}^{| \nu |}
 \sum_{T\in \Pto{\nu_i}} \left( -1\right)^{w_T^{}} \kappa_T^{} .
\end{equation*}
\noindent
where $\ast$ and $\cdot$ are two different multiplications on $\mathcal{A}$ and $\nu = \lbrace \nu_1, \ldots , \nu_{| \nu |} \rbrace$ is a partition of $A$.
\end{lem}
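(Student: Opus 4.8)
The plan is to derive this reformulation directly from \cref{pierwszetwierdzenie} by reorganizing the sum $\sum_{F \in \Pho{A}} (-1)^{w_F} \kappa_F$ according to the way each mixing forest decomposes into its connected components. First I would observe that every reduced forest $F$ is, by \cref{root}, a disjoint union of its trees $T_1, \ldots, T_s$, and that the leaf sets of these trees form a set partition $\nu = \{\nu_1, \ldots, \nu_s\} \in \Pp{A}$. Conversely, prescribing a partition $\nu$ together with a choice of one tree on each block $\nu_i$ recovers $F$ uniquely, which gives a bijection
\[
\Pho{A} \;\longleftrightarrow\; \big\{ (\nu, (T_i)_i) : \nu \in \Pp{A},\; T_i \in \Pto{\nu_i} \big\},
\]
where each block $\nu_i$ carries the division inherited from the ambient multisets $A_1, \ldots, A_n$. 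The point that makes the fibre over $\nu_i$ exactly $\Pto{\nu_i}$ is that the mixing condition of \cref{kT} is \emph{local}: it constrains only those (non-leaf) vertices all of whose children are leaves, so a forest is mixing if and only if each of its trees is mixing. Hence $F \in \Pho{A}$ corresponds precisely to a tuple of mixing trees $T_i \in \Pto{\nu_i}$.

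Next I would check that both statistics in the summand split along this decomposition. Since $F$ is the disjoint union of the $T_i$, its vertex count and its leaf count are the respective sums over the trees, so by \cref{kT} the weight is additive, $w_F = \sum_{i=1}^{|\nu|} w_{T_i}$, and therefore $(-1)^{w_F} = \prod_i (-1)^{w_{T_i}}$. Likewise, \cref{xxx} defines $\kappa_F$ as the $\ast$-product of the root cumulants over the trees of $F$; for a single tree $T_i$ this root cumulant is exactly $\kappa_{T_i}$, so $\kappa_F = \mathop{\ast}\limits_{i=1}^{|\nu|} \kappa_{T_i}$. Substituting these two facts into \cref{pierwszetwierdzenie} and summing first over the choices of tree on each fixed block yields
\[
\sum_{F \in \Pho{A}} (-1)^{w_F} \kappa_F
= \sum_{\nu \in \Pp{A}} \mathop{\ast}\limits_{i=1}^{|\nu|} \left( \sum_{T \in \Pto{\nu_i}} (-1)^{w_T} \kappa_T \right),
\]
which is the claimed identity, since the left-hand side equals the desired $\cdot$-product by \cref{pierwszetwierdzenie}.

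The only genuine care needed lies in the degenerate terms of this decomposition, and this is where I expect the (modest) main obstacle to be. A singleton block $\nu_i = \{a\}$ must be read as a one-leaf tree: it has no node, so the mixing condition is satisfied vacuously, it has $w_T = 0$, and it contributes the bare label $a$; one should confirm this matches how \cref{kT} and \cref{xxx} treat trivial trees. At the opposite extreme, a block lying entirely inside a single multiset $A_j$ and of size at least two admits no mixing tree at all, since its bottom node would have all its children from one $A_j$; there the inner sum $\sum_{T \in \Pto{\nu_i}}$ is empty and, by bilinearity of $\ast$, annihilates the whole term. I would verify that these two effects are exactly the ones governing which $\nu \in \Pp{A}$ are realizable by a mixing forest, so that extending the outer sum to \emph{all} partitions of $A$ introduces only vanishing terms and the reorganization is a true equality rather than a reindexing of the generic terms alone.
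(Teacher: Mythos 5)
Your argument is essentially the paper's own proof: decompose each mixing forest $F \in \Pho{A}$ into its constituent trees, note that the leaf sets form a partition $\nu$ of $A$, that $\kappa_F = \mathop{\ast}\limits_i \kappa_{T_i}$ and $(-1)^{w_F} = \prod_i (-1)^{w_{T_i}}$, and regroup the sum by $\nu$. You are in fact somewhat more careful than the paper, which does not explicitly address the singleton blocks or the blocks admitting no mixing tree (whose empty inner sum kills the corresponding $\ast$-product), but the route is the same.
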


\begin{proof}
\cref{pierwszetwierdzenie} presents $\left( a^1_1 \ast \cdots \ast a_{k_1}^1 \right)  \cdots \left(  a^n_1 \ast \cdots \ast a_{k_n}^n \right)$ as a sum over reduced mixing forests of cumulants associated to those forests. Observe that each reduced mixing forest $F$ splits naturally into a collection of trees $T_1, \ldots, T_k$. 
Each of $T_i$ possesses the property of being reduced and mixing. Leaves of $F$ are labelled by elements of $\mathcal{A}$, thus we denoted by $A$ the multiset consisting of those labels. Division of $F$ into $T_1, \ldots, T_k$ determines a partition $\nu = \lbrace \nu_1, \ldots , \nu_{k} \rbrace$ of a set $A$, namely $\nu_i \subset A$ consists of all labels of leaves of $T_i$. 
The cumulant $\kappa_F$ is equal to:
\begin{equation*}
\mathop{\mathlarger{\mathlarger{\mathlarger{\mathlarger{\mathlarger{\ast}}}}}}\limits_{i=1}^{k}
 \kappa_{T_i}^{} 
\end{equation*}
\noindent
by the definition. Moreover $ \left( -1\right)^{w_{F}^{}} = \prod_{i=1}^k \left( -1\right)^{w_{T_i}^{}} $. 
\end{proof}

\begin{tw}
With notation presented above, for any two partitions $\pi $ and $\sigma$, the following decomposition is valid
\begin{equation}
\label{Struct}
\Ch_\pi \cdot \Ch_\sigma =
\sum_{\nu \in \mathcal{P}(A)}
\mathop{\mathlarger{\mathlarger{\mathlarger{\mathlarger{\mathlarger{\ast}}}}}}\limits_{i=1}^{| \nu |}
 \sum_{T\in \Pto{\nu_i}} \left( -1\right)^{w_T^{}} \kappa_T^{} ,
\end{equation}
\noindent
where $\nu = \lbrace \nu_1, \ldots , \nu_{| \nu |} \rbrace$ is a partition of $A$ and $\Pto{\nu_i}$ denotes the set of all reduced mixing trees on $\nu_i \subseteq A$.

Moreover, there is the following restriction on the degree of products of cumulants:
\begin{equation*}
\deg \Bigg(
\mathop{\mathlarger{\mathlarger{\mathlarger{\mathlarger{\mathlarger{\ast}}}}}}\limits_{i=1}^{| \nu |}
 \sum_{T\in \Pto{\nu_i}} \left( -1\right)^{w_T^{}} \kappa_T^{} 
\Bigg)
\leq  |\pi | + |\sigma | + 2 | \nu |,
\end{equation*}
\noindent
where $ | \nu |$ is the number of parts in partition $\nu$. 
\end{tw}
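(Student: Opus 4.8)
The plan is to deduce both assertions directly from the machinery already built, namely \cref{long} and \cref{rest}, together with the degree formula $\deg \Ch_\pi = |\pi| + \ell(\pi)$ and the fact (imported from \cite{Sniady2016}) that the identity map $(\mathcal{A},\cdot) \stackrel{\id}{\longrightarrow} (\mathcal{A},\ast)$ on the algebra of Jack characters enjoys the approximate factorization property. The first formula requires essentially no work beyond recognizing the right normal form. Since the $\ast$-product is concatenation of partitions, we have $\Ch_\pi = \Ch_{\pi_1} \ast \cdots \ast \Ch_{\pi_n}$ and $\Ch_\sigma = \Ch_{\sigma_1} \ast \cdots \ast \Ch_{\sigma_l}$, so that the pointwise product $\Ch_\pi \cdot \Ch_\sigma$ is literally the left-hand side of \cref{long} for the two multisets $A_1, A_2$ attached to $\pi$ and $\sigma$. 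Substituting the statement of \cref{long} for this division $A = A_1 \cup A_2$ gives \eqref{Struct} immediately.

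For the degree estimate I would first unfold the right-hand side. Fixing, on each part $\nu_i$ of $\nu$, a single reduced mixing tree $T_i \in \Pto{\nu_i}$, the corresponding summand of the $\ast$-product is exactly the forest cumulant $\kappa_F$ of the reduced mixing forest $F = T_1 \cup \cdots \cup T_{|\nu|}$. This forest has precisely $f = |\nu|$ trees, one per part of $\nu$. Because the algebra is filtered and $\ast$ respects the filtration, the degree of the entire expression is bounded by the maximum of $\deg \kappa_F$ over all such choices of trees, so it suffices to bound $\deg \kappa_F$ for a forest with $f = |\nu|$ trees.

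Here the key input is \cref{rest}, which under the approximate factorization property yields
\[
\deg \kappa_F \leq \left( \sum_{a \in A} \deg a \right) - 2|A| + 2f .
\]
It then remains to evaluate the right-hand side. Every element of $A$ is a one-part character, so by $\deg \Ch_\rho = |\rho| + \ell(\rho)$ we get $\deg \Ch_{\pi_i} = \pi_i + 1$ and $\deg \Ch_{\sigma_j} = \sigma_j + 1$. Summing over all leaves gives $\sum_{a\in A} \deg a = |\pi| + \ell(\pi) + |\sigma| + \ell(\sigma)$, while $|A| = \ell(\pi) + \ell(\sigma)$. Substituting $f = |\nu|$ and simplifying,
\[
\deg \kappa_F \leq |\pi| + |\sigma| - \ell(\pi) - \ell(\sigma) + 2|\nu| \leq |\pi| + |\sigma| + 2|\nu| ,
\]
the last step discarding the nonpositive term $-\ell(\pi) - \ell(\sigma)$. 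This is the asserted bound.

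The proof is therefore mostly bookkeeping: the genuine mathematical content lies in \cref{long} (the decomposition into forest cumulants) and in \cref{rest} (the degree estimate for forest cumulants), both already available. The only points demanding care are the identification of $f = |\nu|$ when the $\ast$-product of single-tree cumulants is reassembled into one forest cumulant, and the faithful substitution of degrees of the one-part characters. I do not expect a genuine obstacle here, provided the approximate factorization property of \cite{Sniady2016} is invoked to license \cref{rest} in this setting.
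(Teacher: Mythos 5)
Your proposal is correct and follows essentially the same route as the paper, which derives the identity \eqref{Struct} by specializing \cref{long} to the two multisets of one-part Jack characters attached to $\pi$ and $\sigma$, and obtains the degree bound from \cref{rest} with $f=|\nu|$ trees. Your explicit substitution of $\deg\Ch_\rho=|\rho|+\ell(\rho)$ in fact yields the slightly sharper bound $|\pi|+|\sigma|-\ell(\pi)-\ell(\sigma)+2|\nu|$, which you correctly relax to the stated inequality.
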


Presented statement is based on \cref{long}, the bound of a degree follows immediately from \cref{rest}.

The division given in \cref{Struct} is a toll for capturing structure constants $g_{\pi ,\sigma}^\mu$. It opens a way for induction over the number $\ell(\sigma) +\ell(\pi)$. More precisely, we express $\kappa_T$ in in the linear basis of Jack characters inductively, according to the number of leaves. In a forthcoming paper \cite{Bur16} we give an explicit combinatorial interpretation for the coefficients of high-degree monomials in the deformation parameter $\delta$.

\subsection{How to prove the main theorem?}
\cref{pierwszetwierdzenie} is a straightforward conclusion from two propositions which we present in this section. In our opinion they are interesting themselves. 

We begin by introducing a gap-free vertex colouring on forests $F \in \Pht{A}$. 

\begin{de}
\label{def3}
For a reduced forest $F$ with leaves in a multiset $A = A_1 \cup \cdots \cup A_n $ we say that $c$ is a gap-free vertex colouring with length $r$ if 
\begin{itemize}
\item $c$ is a coloured by the numbers $\{ 0,\ldots,r\}$ and each colour is used at least once;
\item each leaf is coloured by $0$; 
\item the colours are strictly increasing on any path from the root to a leaf.
\end{itemize}
\noindent
We denote by $|c|:=r$ the length of $c$ . We call such a colouring $c$ \textit{weakly-mixing} if it satisfies one of the following additional conditions: 
\begin{enumerate}
\item either there exists a vertex coloured by $1$ with at least two descendants, each of whom belongs to a distinct multiset $A_i$,
\item or colouring $c$ does not use the colour $1$ at all.
\end{enumerate}
\noindent
We denote by $\mathcal{C}_F$ the set of all gap-free and weakly-mixing colourings of a forest $F$.
\end{de}

The following result is a juggling of a concept of cumulants. We present its proof in \cref{5}.

\begin{restatable}{prop}{przedostatni}
\label{przedostatnie1}
Let $A_1, \ldots , A_n $ be multisets consisting of elements of $\mathcal{A}$. Let $A$ be a sum of those multisets. Then
\begin{equation}
\label{zzz}
\left( a^1_1 \ast \cdots \ast a_{k_1}^1 \right)  \cdots \left(  a^n_1 \ast \cdots \ast a_{k_n}^n \right) 
= \sum_{F \in \Pht{A} } \kappa_F^{} \sum_{\ c \in \mathcal{C}_F } {(-1)}^{|c|} .
\end{equation}
\end{restatable}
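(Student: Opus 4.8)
The plan is to transform the left-hand side by the moment--cumulant formula, transform the right-hand side by peeling off its lowest colour layer, and exhibit both as the same sum over partitions of $A$. Throughout I write $m(B):=\mathop{\ast}\limits_{a\in B}a$ for the full $\ast$-product of a multiset $B$, I recall that $\kappa$ of a singleton is the element itself (since $\mathbb{E}=\id$), and I call a block of a partition of $A$ \emph{mixing} if it meets at least two of the multisets $A_1,\ldots,A_n$. First I would rewrite the left-hand side: applying \cref{cumulant2} to each factor, $m(A_i)=\sum_{\rho_i\in\Pp{A_i}}\kappa_{\rho_i}$, and multiplying out, the product $m(A_1)\cdots m(A_n)$ becomes $\sum_{\rho}\kappa_\rho$, the sum running over those $\rho\in\Pp{A}$ each of whose blocks lies inside a single $A_i$, that is, over the partitions with \emph{no} mixing block. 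Since $\sum_{\rho\in\Pp{A}}\kappa_\rho=m(A)$ by \cref{cumulant3}, the target can be written as
\[
\text{LHS}=m(A)-\sum_{\substack{\rho\in\Pp{A}\\ \rho\text{ has a mixing block}}}\kappa_\rho,
\]
so it suffices to bring the right-hand side to this form.

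Next I would organise the right-hand side by its colour-$1$ layer. A weakly-mixing colouring $c\in\mathcal{C}_F$ either uses no colour $1$ --- which by gap-freeness forces $|c|=0$, so that $F$ is the discrete forest and the term equals $\kappa_F=m(A)$ --- or it uses colour $1$, in which case \cref{def3}(1) requires some colour-$1$ vertex to be mixing. In the latter case every colour-$1$ vertex has only leaf children, so these vertices determine a partition $\rho$ of $A$ (their leaf-sets, together with the remaining leaves as singletons) that has at least one mixing block. Contracting each colour-$1$ vertex with its leaves to a single new leaf labelled by $\kappa(a:a\in b)$, where $b$ is its block, and lowering every remaining colour by one, yields a reduced forest $F'$ on the contracted multiset $A/\rho$ carrying an \emph{arbitrary} gap-free colouring $c'$ with $|c'|=|c|-1$; by \cref{xxx} one has $\kappa_F=\kappa_{F'}$. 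This contraction is a bijection between the nontrivial weakly-mixing coloured forests with colour-$1$ layer $\rho$ and all gap-free coloured forests on $A/\rho$, so summing $(-1)^{|c|}=-(-1)^{|c'|}$ gives
\[
\text{RHS}=m(A)-\sum_{\substack{\rho\in\Pp{A}\\ \rho\text{ has a mixing block}}}\Bigg(\sum_{F'\in\Pht{A/\rho}}\kappa_{F'}\sum_{c'}(-1)^{|c'|}\Bigg).
\]

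The proof is then finished by a \emph{master identity}: for any multiset $B$ the unrestricted signed colouring sum (dropping the weakly-mixing requirement) equals the completely $\cdot$-factored product, $\sum_{F\in\Pht{B}}\kappa_F\sum_{c\text{ gap-free}}(-1)^{|c|}=\prod_{e\in B}e$. Applied to $B=A/\rho$, whose elements are the cumulants $\kappa(a:a\in b)$, the inner bracket above becomes $\prod_{b\in\rho}\kappa(a:a\in b)=\kappa_\rho$, and the two displayed expressions for LHS and RHS coincide. I would prove the master identity by the same peeling carried out with no mixing constraint: its nontrivial part contributes $-\sum_{\rho}(\text{master identity on }A/\rho)$, summed over all $\rho$ other than the partition into singletons; since each such $A/\rho$ has strictly fewer elements, induction on $|B|$ together with $\sum_{\rho}\kappa_\rho=m(B)$ and the fact that $\kappa$ of the singleton partition is $\prod_{e}e$ closes it at once (the base case $|B|=1$ being immediate).

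The main work, and the step most in need of care, is the colour-$1$ peeling bijection: I must check that the decomposition into a bottom partition $\rho$ and an upper coloured forest is well defined and exhaustive, that the reducedness condition of \cref{root} corresponds exactly to the blocks of $\rho$ of size at least two, and --- crucially --- that after peeling the \emph{upper} colourings are completely unconstrained, so that the master identity, rather than the proposition itself, governs them and the induction does not become circular. This is precisely where the weakly-mixing condition of \cref{def3} does its work: condition (1) is exactly the demand that the peeled-off partition $\rho$ possess a mixing block, which is what selects the ``$\rho$ has a mixing block'' terms and cancels everything else against $m(A)$.
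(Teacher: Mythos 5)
Your argument is correct, and while its first step coincides with the paper's, the rest follows a genuinely different route. Both proofs begin identically: expanding each $\ast$-factor by the moment--cumulant formula and subtracting from the full $\ast$-product rewrites the left-hand side as $\left(\ast_{a\in A}a\right)-\sum_{\nu\in\Po{A}}\kappa_\nu$, which is exactly the paper's \eqref{sumakumulant}. From there the paper works \emph{downward} from the partition sum: it iterates the procedure \eqref{procedure} to expand each $\kappa_\nu$ into a signed sum over nested upward sequences of partitions (\cref{prop2}) and then builds an explicit bijection between such sequences and pairs consisting of a reduced forest and a gap-free weakly-mixing colouring (\cref{bijekcja}). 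You instead work \emph{upward} from the coloured-forest side: you peel off the colour-$1$ layer, note that weak mixing says precisely that the peeled-off partition $\rho$ is mixing while the residual colouring on $A/\rho$ is an unconstrained gap-free colouring, and close the argument with your master identity $\sum_{F\in\Pht{B}}\kappa_F\sum_{c\,\text{gap-free}}(-1)^{|c|}=\prod_{e\in B}e$, proved by induction on $|B|$. These are two directions of the same correspondence (the colour layers of $c$ are the levels of a nested upward sequence), but your packaging turns the paper's rather terse ``applying this iterative procedure is nothing else but summing over all nested upward sequences'' into a clean induction, and isolates a reusable identity which is in fact the special case of \cref{przedostatnie1} where every $A_i$ is a singleton and weak mixing is vacuous; the paper's version, in exchange, exhibits the intermediate objects $\kappa_\omega$ that make the link to the iterated moment--cumulant formula transparent. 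The point you rightly flag as delicate --- that the colour-$1$ peeling is a bijection preserving reducedness and the value of $\kappa_F$ --- does go through, since a colour-$1$ vertex necessarily has only leaf children and contraction does not change the arity of any higher vertex.
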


In \cref{6}, we will show that summing over all colourings $c \in \mathcal{C}_F$ for a reduced forest $F \in \Pht{A}$ gives a surprisingly simple number. This result is presented in proposition below. 

\begin{restatable}{prop}{proppp}
\label{przedostatnie2}
Let $A_1, \ldots , A_n $ be multisets consisting of elements of $\mathcal{A}$. Let $A$ be a sum of those multisets. Then, for any reduced forest $F \in \Pht{A}$, the following holds:
\begin{equation*}
\sum_{c \in \mathcal{C}_F} {\left( -1\right) }^{|c| } = 
\left\lbrace 
\begin{array}{lccl}
{\left( -1\right) }^{w_F }  &&& $if $ F \in \Pho{A} ,\\
0                           &&& $otherwise$.
\end{array}
\right. 
\end{equation*}

\end{restatable}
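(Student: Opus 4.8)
The plan is to translate the whole statement into a question about the poset $P$ of internal (non-leaf) vertices of $F$, ordered so that each vertex is larger than all of its descendants. Since every leaf is forced to carry colour $0$ and the colours are strictly monotone along every root--leaf path, the colour must strictly increase from the leaves up to the root; in particular every internal vertex carries a positive colour, each parent's colour exceeds its children's, and a gap-free colouring of $F$ is the same thing as a surjective, strictly order-preserving map $P \to \{1,\dots,r\}$ with $|c|=r$. Note that $w_F = |P|$. The minimal elements of $P$ are exactly the vertices all of whose children are leaves; by \cref{kT} these split into the \emph{mixed} ones $M$ (children from at least two multisets) and the \emph{pure} ones $U$ (children from a single multiset), and $F \in \Pho{A}$ if and only if $U=\emptyset$. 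Finally, a vertex can receive colour $1$ only if it is minimal in $P$, so the set $L$ of colour-$1$ vertices is an arbitrary subset of $M\cup U$, and by \cref{def3} a colouring lies in $\mathcal{C}_F$ exactly when either $L=\emptyset$ (equivalently $P=\emptyset$, so $r=0$) or $L\cap M\neq\emptyset$.

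The first step is the unrestricted identity: for every finite poset $Q$,
\[
f(Q) \;:=\; \sum_{c \text{ gap-free on } Q} (-1)^{|c|} \;=\; (-1)^{|Q|}.
\]
This is a form of order-polynomial reciprocity (it equals the strict order polynomial of $Q$ at $-1$), but I would prove it directly by induction on $|Q|$. For $Q\neq\emptyset$ I peel off the colour-$1$ layer $L$, which must be a nonempty set of minimal elements, leaving a gap-free colouring of $Q\setminus L$ of length $r-1$; since $(-1)^r=-(-1)^{r-1}$ this gives
\[
f(Q) \;=\; -\!\!\sum_{\emptyset \neq L \subseteq \min Q} f(Q\setminus L).
\]
Plugging in the induction hypothesis $f(Q\setminus L)=(-1)^{|Q|-|L|}$ and using the elementary cancellation $\sum_{j\ge 1}\binom{m}{j}(-1)^{j}=-1$ (with $m=|\min Q|\ge 1$) yields $f(Q)=(-1)^{|Q|}$, with base case $f(\emptyset)=1$.

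With this in hand I compute the weakly-mixing sum by the same peeling of the colour-$1$ layer, but retaining only the choices of $L$ permitted by \cref{def3}:
\[
\sum_{c \in \mathcal{C}_F} (-1)^{|c|}
\;=\; [\,P=\emptyset\,] \;-\!\!\sum_{\substack{L \subseteq M\cup U \\ L \cap M \neq \emptyset}} f(P\setminus L).
\]
Substituting $f(P\setminus L)=(-1)^{|P|-|L|}$ and writing $\{L:L\cap M\neq\emptyset\}$ as all subsets of $M\cup U$ minus the subsets of $U$, the signed count collapses (with the convention $0^0=1$) to $[\,P=\emptyset\,]-(-1)^{|P|}\bigl(0^{|M|+|U|}-0^{|U|}\bigr)$. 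When $U=\emptyset$ this equals $(-1)^{w_F}$ (handling the degenerate $P=\emptyset$ case through the bracket term), and when $U\neq\emptyset$ both powers of $0$ vanish and the sum is $0$; this is precisely the dichotomy $F\in\Pho{A}$ versus $F\notin\Pho{A}$.

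The main obstacle is the unrestricted reciprocity identity $f(Q)=(-1)^{|Q|}$: once the colour-$1$ peeling bijection and the binomial cancellation are in place, the weakly-mixing refinement is a two-line inclusion--exclusion on the minimal layer. The only delicate points are the degenerate ends --- the empty poset (where $r=0$ is weakly-mixing via alternative $(2)$ of \cref{def3}) and singleton minimal layers --- where one must check that the $0^0$ conventions and the $[\,P=\emptyset\,]$ term are accounted for correctly.
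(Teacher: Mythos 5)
Your proof is correct, and it takes a genuinely different route from the paper. The paper splits into two cases and handles them by different mechanisms: for non-mixing forests it builds a sign-reversing involution that shifts the colour of one fixed ``pure'' vertex, and for mixing forests it first reduces to trees, then inducts on the height, decomposing a colouring at the \emph{root} into projections onto the root's subtrees and evaluating the resulting recursion via a generating function of lattice paths in $\mathbb{N}^r$. You instead decompose at the \emph{bottom}: you identify gap-free colourings with surjective strict order-preserving maps on the poset $P$ of internal vertices, prove the unrestricted identity $\sum_c (-1)^{|c|}=(-1)^{|P|}$ (order-polynomial reciprocity at $-1$) by peeling the colour-$1$ layer, and then obtain both branches of the dichotomy in one stroke from the inclusion--exclusion $\sum_{L\subseteq M\cup U,\ L\cap M\neq\emptyset}(-1)^{|L|}=0^{|M|+|U|}-0^{|U|}$ over the admissible colour-$1$ layers. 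What your approach buys is uniformity and brevity: there is no case split in the structure of the argument, no reduction from forests to trees, and no need for the path-counting function $F=G$; it also makes transparent that only the minimal layer of $P$ ever interacts with the weakly-mixing condition, and it places the key identity in a standard context (Stanley reciprocity for the strict order polynomial), at the cost of working with induced subposets $P\setminus L$ that are no longer posets of forests --- which is harmless since your reciprocity lemma is stated for arbitrary finite posets. The degenerate cases (the all-leaves forest with $P=\emptyset$, where $r=0$ is admitted by alternative $(2)$ of \cref{def3} and $w_F=0$) are handled correctly by your bracket term.
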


Observe that combing \cref{przedostatnie1} and \cref{przedostatnie2} we obtain the statement of Theorem \ref{pierwszetwierdzenie}.

\subsection{Related work and free probability theory.}
A \emph{noncommutative probability space} is a pair $\left( \mathcal{A}, \phi \right) $ consisting of a unital algebra $\mathcal{A}$ and a linear form $\phi$ on $\mathcal{A}$ such that $\phi (1) =1$, which is called a \emph{noncommutative expectation} \cite{Speicher94,Speicher98}. Functions
\begin{equation*}
m_n \left( a_1,\ldots,a_n\right)  := \phi  \left( a_1\cdots a_n\right) 
\end{equation*} 
\noindent
are called \emph{free moments}.

Roland Speicher introduced the free cumulant functional \cite{Speicher94} in the free probability theory. It is related to the lattice of noncrossing partitions of the set $[n]$ in the same way in which the classic cumulant functional is related to the lattice of all partitions of that set.

\begin{de}
A partition $\nu \in \mathcal{P}([n])$ is \emph{noncrossing} if there is no quadruple of elements $i<j<k<l$ such that $i\sim_{\nu} k$,  
$j\sim_{\nu} l$, and  $\neg ( i\sim_{\nu} j )$, where ''$\sim_{\nu}$'' denotes the relation of being in the same set in the partition $\nu$.
\end{de}

The \emph{free cumulants} are defined implicit by the system of equations
\begin{equation*}
\phi \left( a_1 \cdots a_n \right) =
\sum_{\nu \in \mathcal{P}_{\mathbb{NC}} ([n])}\prod_{b \in \nu} \kappa\left( a_i : i \in b \right) 
\end{equation*}
\noindent
where the sum runs over all \text{noncrossing} set partitions of $[n]$, compare to \cref{cum2}. Möbius inversion over the lattice of noncrossing partitions gives the formula for the free cumulants in terms of the free moments. 

Josuat-Verg\`es, Menous, Novelli and Thibon \cite[Theorem 4.2]{MR3641810} give
 formulas for free cumulants in terms of Schröder trees, \emph{i.e.}~reduced plane trees for which the rightmost sub-tree is a leaf. To each such a tree they associate the term constructed by the mapping $\phi$. 
At the first glance the formula seems to be related to the formula we give in a current paper. 
However, the reasons for appearance of reduced trees in both papers are different. 
In their work reducedness of trees is a natural property appearing while recovering the \emph{free cumulant} from free moments. 
In our case, where $\mathbb{E}$ or $\phi$ is the identity, we have
 $\kappa \left( a\right) =a$ for any element $a\in \mathcal{A}$. Hence we consider reduced trees. 
The notion of free cumulants is based on noncrossing partitions which follows flatness of trees they consider.\hfill \break

There are approaches to freeness 
other then 
considering a linear form $\phi$ on an algebra $\mathcal{A}$. The standard and the most common approach is to additionally require that $\mathcal{A}$ is a $\mathcal{B}$-module or $\mathcal{B} \subseteq \mathcal{A}$ is a subalgebra of $\mathcal{A}$. The mapping $\phi  : \mathcal{A}\longrightarrow \mathcal{B}$ satisfies the bimodule map property:  
\begin{equation*}
\phi\Big(   b_1  \cdot a \cdot b_2  \Big) = b_1 \cdot \phi \left( a\right)  \cdot b_2 ,
\end{equation*}
\noindent
for any $a \in \mathcal{A}$ and $b_1, b_2 \in \mathcal{B}$. 
There are several slightly different approaches, \emph{e.g.}, free products came with amalgamation over module $\mathcal{B}$ \cite{Voiculescu}, where cumulants and moments are operator-valued multiplicative functions \cite[Definition 2.1.1]{Speicher98}. 

Our work is based on the idea of taking an identity map between elements of an algebra with two different multiplications ($\phi \equiv \id$). Such situation does not arise naturally in free probability theory, where $\phi$ is usually either linear form or bimodule map on an algebra $\mathcal{A}$. It rises the question if it is still possible to define naturally cumulants in the setup of noncommutative algebras with two different products.

\section{Proof of \cref{przedostatnie1}}
\label{5}

In this section we shall prove \cref{przedostatnie1}.
We use the same notation as in \cref{main subsection}. 
We denote by $A_1 , \ldots ,A_n$ multisets consisting of elements of $\mathcal{A}$. 
We denote by $ A = A_1 \cup \cdots \cup A_n$ the multiset, which is the sum of all multisets $A_i$.
We use also the following notation for the elements of $A_i$: 
\begin{equation*}
A_i =\left\lbrace  a_1^i ,\ldots ,a_{k_i}^i \right\rbrace ,
\end{equation*}
\noindent
hence the multiset $A$ consists of the following elements: 
\begin{equation*}
A =\left\lbrace  a_1^1 ,\ldots ,a_{k_1}^1 , \ldots ,  a_1^n ,\ldots ,a_{k_n}^n \right\rbrace .
\end{equation*}
\noindent
We denote additionally the set of all partitions of $A$ by $\Pp{A} $. We denote by $\Po{A} $ a set of all \textit{mixing partitions} of $A$, i.e. all partitions $\nu =\{ \nu_1 ,\ldots, \nu_l \}$ such that 
\begin{equation*}
 \mathop{\exists}\limits_{i \in [l]}^{}    
 \mathop{\forall}\limits_{j \in [n]}^{}
\nu_i \not\subseteq A_j .
\end{equation*}

\subsection{Outline of the proof.}
Firstly, we express the left-hand side of \cref{zzz} 
as a sum of cumulants, where the sum runs over all mixing partitions 
$\nu \in  \Po{A}$, see \cref{sumakumulant} below. By applying inductively the procedure \eqref{procedure} described below, we replace summation over all mixing partitions $\nu \in  \Po{A}$ by a sum over all \textit{nested upward sequences of partitions}, see \cref{def1}. Then we construct a bijection between such sequences and reduced forests $F \in \Pht{A}$ equipped with gap-free, weakly-mixing colourings $c\in \mathcal{C}_F$ (see Definitions \ref{root}, \ref{def3}). 
Later on we will prove that the weighted sum over all gap-free colourings for a fixed forest is either equal to $0$ or to $\pm 1$.

\subsection{Cumulants of mixing partitions.}

Observe that the following equality of the sets holds:
\begin{equation*}
\Pp{A} = \Big( \Pp{A_1} \times \cdots \times\Pp{A_n} \Big)  \ \cup \  \Po{A} ,
\end{equation*}
\noindent
where the elements of the Cartesian product $\Pp{A_1} \times\cdots \times \Pp{A_n}$ are understood as a partition of a multiset $A =A_1 \cup \cdots \cup A_n$.

We apply the moment-cumulant formula given in \cref{cumulant2}:

\begin{equation*}
 a^1_1 \ast \cdots \ast a_{k_1}^1  \ast\cdots\ast   a^n_1 \ast \cdots \ast a_{k_n}^n 
                  =\sum_{\nu \in \Pp{A} } \kappa_\nu .
\end{equation*}

\noindent
We split all partitions $\Pp{A}$ into two categories: mixing partitions $\Po{A}$ and products of partitions $\Pp{A_i}$. In this way:
\begin{equation*}
\begin{split}
\sum_{\nu \in \Pp{A} } \kappa_\nu 
=\sum_{\nu \in \coprod_{i\in [n]} \Pp{A_i} } \kappa_\nu   + \sum_{\nu \in \Po{A} } \kappa_\nu   
                   =\prod_{i=1}^n \sum_{\nu \in \Pp{A_i} } \kappa_\nu   + \sum_{\nu \in \Po{A} } \kappa_\nu   \\
                         =\left( a^1_1 \ast \cdots \ast a_{k_1}^1 \right)  \cdots \left(  a^n_1 \ast \cdots \ast a_{k_n}^n \right)+  \sum_{\nu \in \Po{A} } \kappa_\nu .
\end{split}
\end{equation*}

\noindent
From the equations above we obtain the following formula:
\begin{equation}
\label{sumakumulant}
\begin{split}
\left( a^1_1 \ast \cdots \ast a_{k_1}^1 \right) \cdots &\left(  a^n_1 \ast \cdots \ast a_{k_n}^n \right) \\ &= \left( a^1_1 \ast \cdots \ast a_{k_1}^1 \right)  \ast\cdots\ast \left(  a^n_1 \ast \cdots \ast a_{k_n}^n \right)
-  \sum_{\nu \in \Po{A} } \kappa_\nu .
\end{split}
\end{equation}

\subsection{Cumulants of upward sequences of partitions.}

Each cumulant on the right-hand side of \cref{sumakumulant} is a $\cdot$-product of simple cumulants. 
We use the moment-cumulant formula in a form given below
\begin{equation}
\label{procedure}
a_1 \cdots a_k =a_1\ast \ldots\ast a_k -\sum_{\substack{\nu  \in \Pp{[k]} \\ \nu \neq \{\{ 1\},\ldots ,\{ k\}\}}} \c{a_1,\ldots, a_k} .
\end{equation}
\noindent
to replace $\cdot$-products by $\ast$-products and $\cdot$-products consisting of a strictly smaller number of components.

For each cumulant on the right-hand side in \cref{sumakumulant} we apply the procedure \cref{procedure}. As an output we get one term which is a $\ast$-product of cumulants and several terms of the form of a $\cdot$-product of cumulants. Observe that in each term of the second type the number of factors is strictly smaller than before applying the procedure. We apply to them this procedure iteratively as long as we have $\cdot$-terms in our extension. In the end we get a sum of the terms given by $\ast$-product and cumulants. 

\begin{ex}
Let us express $\left( a_1^1 \ast a_2^2 \right) \cdot a_1^2$ using the procedure described above:
\begin{equation*}
\begin{array}{lcll}
\left( a_1^1 \ast a_2^1 \right) \cdot a_1^2&\overset{\eqref{sumakumulant}}{=}& a_1^1 \ast a_2^1\ast a_1^2 - \k{a_1^1 , a_2^1 , a_1^2} \\
&& - \k{a_1^1 , a_1^2}\cdot\k{a^1_2} -   \k{a_2^1 , a_1^2} \cdot \k{a_1^1}   \\
\\
&\overset{\eqref{procedure}}{=}& a_1^1 \ast a_2^1\ast a_1^2 - \k{a_1^1 , a_2^1 , a_1^2}\\
&& -\kappa \left( \k{a_1^1 , a_1^2},\k{a_2^1}\right) +\kappa \left( \k{a_2^1 , a_1^2},\k{a_1^1}\right)  \\
&& - \k{a_1^1 , a_1^2}\ast\k{a_2^1} + \k{a_2^1 , a_1^2} \ast \k{a_1^1} .
\end{array}
\end{equation*}
\end{ex}

To formalize our idea we define nested upward sequences and theirs cumulants.

\begin{de}
\label{def1}
A sequence of partitions $\omega = \left( \nu^1 \nearrow \cdots \nearrow \nu^r \right) $ is said to be \textit{upward} if 
\begin{equation*}
\nu^{i+1} \textrm{ is a partition of the set } \nu^{i} ,
\end{equation*}
for any $1\leq i \leq r-1$ and $\nu^1$ is a partition of a multiset $A$. Moreover, if for each $i$ the partition $\nu^{i+1}$ is non-trivial,
\emph{i.e.} $\nu^{i+1} \neq \left\lbrace  \nu^{i} \right\rbrace $, it is said to be \textit{nested}. 
We define the length of an upward sequence of partitions $\omega =\left( \nu^1 \nearrow \cdots \nearrow \nu^r\right) $ as the length of a sequence, and we denote $|\omega | =r$.
\end{de}

Let us provide a simple example.

\begin{ex}
\label{ex}
Consider a $5$-element multiset $A= \lbrace a_1, \ldots, a_5 \rbrace$ and the following nested upward sequences of partitions $\omega_1 = (\nu^1 \nearrow \nu^2 )$ and $\omega_2 = (\nu^1 \nearrow \nu^2 \nearrow \nu^3 )$, where:
\begin{equation*}
\begin{array}{l}
\nu^1 = \Big\{ \big\{a_1 ,a_4 \big\}, \big\{a_2 \big\}, \big\{a_3\big\}, \big\{a_5\big\} \Big\} ,\\
\\
\nu^2 = \bigg\{ \Big\{ \big\{a_1 ,a_4 \big\}, \big\{a_2 \big\}, \big\{a_3 \big\}\Big\}, \Big\{\big\{a_5 \big\} \Big\} \bigg\}, \\
\\
\nu^3 = \Bigg\{ \bigg\{ \Big\{ \big\{a_1 ,a_4 \big\}, \big\{a_2 \big\},  \big\{a_3 \big\} \Big\}, \Big\{\big\{a_5 \big\} \Big\} \bigg\} \Bigg\} .
\end{array}
\end{equation*}
\end{ex}

We introduce the following technical notation (similar to the definition of a cumulant $\kappa_\nu$ for a partition $\nu$). 
\begin{equation*}
\overline{\kappa}_\nu^{} \lbrace a_1, \ldots, a_n \rbrace :=  \big\{ \k{a_{i_1} , \ldots, a_{i_{|\nu_j|}}} {\big\}}_{j=1}^l .
\end{equation*}

\begin{de}
\label{cumm}
Let $\nu$ be a partition of a multiset $A =\{a_1 , \ldots ,a_n \}$. Consider an upward sequence of partitions $\omega = \left( \nu^1 \nearrow \cdots \nearrow \nu^r \right) $ such that 
\begin{equation*}
\nu^{1} =\nu .
\end{equation*}
\noindent
We define the cumulant associated to the sequence $\omega$ as follows
\begin{equation*}
\kappa_\omega^{} := 
\mathop{\mathlarger{\mathlarger{\mathlarger{\mathlarger{\mathlarger{\ast}}}}}}\limits^{}_{b \enskip\in\enskip
{\overline{\kappa}}_{\nu^{r-1}} \Big( \ldots  {\overline{\kappa}}_{\nu^1} \big( a_1, \cdots ,a_n  \big) \Big)  } b.
\end{equation*}
\end{de}

\begin{ex}
The cumulants $\kappa_{\omega_1}$ and $\kappa_{\omega_2}$ associated to the nested upward sequences of partitions $\omega_1$ and $\omega_2$ respectively  from \cref{ex} are of the following forms:
\begin{equation*}
\begin{array}{ll}
\kappa_{\omega_1}^{} &=\kappa \Big( \k{a_1, a_4},\k{a_2},\k{a_3} \Big)\ast \kappa \Big(\k{a_5}\Big) \\
&=\kappa \Big(\k{a_1, a_4},a_2,a_3 \Big)\ast a_5 , \\
\\
\kappa_{\omega_2}^{} &=\kappa \Bigg( \kappa \Big( \k{a_1, a_4},\k{a_2},\k{a_3} \Big) , \kappa \Big(\k{a_5}\Big) \Bigg)\\
&=\kappa \Bigg( \kappa \Big(\k{a_1, a_4},a_2,a_3 \Big), a_5 \Bigg), \\
\end{array}
\end{equation*}
\noindent
where we used the property $\kappa (x) =x$.
\end{ex}

\begin{de}
Consider a multiset $A = A_1 \cup \cdots \cup A_n $. Denote by $ \Pt{A}$ the set of all nested upward sequences of partitions $\omega = \left( \nu^1 \nearrow \cdots \nearrow \nu^r\right) $ such that $\nu^1 \in \Po{A}$ is a mixing partition.
\end{de}

\begin{prop}
Consider a multiset $A=A_1 \cup\cdots\cup A_n$. Then
\label{prop2}
\label{2}
\begin{equation}
\sum_{\nu \in \Po{A} } \kappa_\nu = 
-\sum_{\omega \in \Pt{A} } {(-1)}^{|\omega |}   \kappa_\omega^{} .
\end{equation}
\end{prop}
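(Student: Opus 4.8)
The plan is to expand every summand $\kappa_\nu$ on the left by iterating the moment--cumulant identity \eqref{procedure}, and to match the terms produced with nested upward sequences. Fix a mixing partition $\nu \in \Po{A}$. By definition $\kappa_\nu = \prod_{b \in \nu}\kappa(a : a \in b)$ is a $\cdot$-product of the $|\nu|$ block-cumulants; feeding these block-cumulants into \eqref{procedure} as its arguments gives
\begin{equation*}
\kappa_\nu = \mathop{\ast}\limits_{b \in \nu}\kappa(a : a \in b) - \sum_{\substack{\mu \in \Pp{\nu}\\ \mu \neq \mathrm{disc}}}\prod_{B \in \mu}\kappa\big(\kappa(a : a \in b) : b \in B\big),
\end{equation*}
where $\mathrm{disc}$ is the discrete partition. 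The leading term is exactly $\kappa_\omega$ for the length-one sequence $\omega = (\nu)$, and every remaining summand is again a $\cdot$-product, this time of cumulants-of-cumulants indexed by the blocks of a non-discrete partition $\mu$ of $\nu$; crucially it has $|\mu| < |\nu|$ factors.

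The central step is to iterate this rewriting on each $\cdot$-product produced, stopping a branch only when a leading $\ast$-product is taken. Every fully expanded term is then indexed by a chain $\nu = \nu^1 \nearrow \nu^2 \nearrow \cdots \nearrow \nu^r$ in which each $\nu^{i+1}$ is a non-discrete partition of $\nu^i$, recorded each time we select a summand of the sum rather than the leading $\ast$-product; the discrete choice is precisely what terminates the branch. By \cref{def1} such a chain is exactly a nested upward sequence $\omega$ with first term $\nu$, and since $\nu \in \Po{A}$ we have $\omega \in \Pt{A}$. Moreover, the $\ast$-product at which the branch terminates is obtained by applying $\overline{\kappa}_{\nu^1}, \ldots, \overline{\kappa}_{\nu^r}$ in turn to $a_1, \ldots, a_n$ and then multiplying the resulting blocks with $\ast$, which is precisely $\kappa_\omega$ as defined in \cref{cumm}. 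The iteration always halts because each application of \eqref{procedure} strictly decreases the number of factors.

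I would make the bookkeeping rigorous by induction on the number of factors, stating the induction hypothesis for a $\cdot$-product $\prod_{s} z_s$ of arbitrary elements $z_s \in \mathcal{A}$ — the extra generality is needed because after the first step the factors are themselves cumulants. The base case of a single factor is immediate, since $\kappa(z) = z$ and the only admissible chain is the trivial one. In the inductive step each sub-product in the display has fewer factors, so by the hypothesis it expands into chains whose terminal $\ast$-products acquire one further layer of $\overline{\kappa}$; adjoining $\nu$ at the front of each such chain produces exactly the sequences with $\nu^1 = \nu$ and $\nu^2 = \mu$, and lengthens them by one. The minus sign in \eqref{procedure} then turns a weight $(-1)^{\ell - 1}$ into $(-1)^{\ell}$, while the isolated leading term $\kappa_{(\nu)}$ carries weight $(-1)^0$; hence a chain $\omega$ of length $r$ is weighted by $(-1)^{r-1}$. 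This establishes, for every fixed $\nu$,
\begin{equation*}
\kappa_\nu = \sum_{\substack{\omega \in \Pt{A}\\ \nu^1 = \nu}}(-1)^{|\omega| - 1}\,\kappa_\omega .
\end{equation*}

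Summing over $\nu \in \Po{A}$ and using that the first term of a sequence in $\Pt{A}$ ranges precisely over the mixing partitions (so the sequences regroup disjointly), I obtain
\begin{equation*}
\sum_{\nu \in \Po{A}}\kappa_\nu = \sum_{\omega \in \Pt{A}}(-1)^{|\omega| - 1}\,\kappa_\omega = -\sum_{\omega \in \Pt{A}}(-1)^{|\omega|}\,\kappa_\omega,
\end{equation*}
which is the claimed identity. I expect the only real difficulty to be this combinatorial bookkeeping: checking that the repeated expansion yields each nested upward sequence exactly once, that every recorded refinement is non-discrete (matching the nestedness condition of \cref{def1}), and that the accumulated sign and $\ast$-product are $(-1)^{|\omega|-1}$ and $\kappa_\omega$ respectively.
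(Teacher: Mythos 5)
Your proposal is correct and takes essentially the same route as the paper: the paper's own proof is a two-sentence sketch of exactly this iterated application of \eqref{procedure}, with the resulting terms indexed by nested upward sequences and the sign determined by the number of iterations. Your version simply makes the induction on the number of factors and the sign bookkeeping explicit (and correctly reads the nestedness condition as non-discreteness of each successive refinement, which is what the procedure actually produces and what \cref{ex} illustrates).
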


\begin{proof}
Apply procedure \eqref{procedure} iteratively to the left-hand side of \cref{2}.
Observe that applying this iterative procedure 
is nothing else but summing over all nested upward sequences of partitions $\omega = \left( \nu^1 \nearrow \cdots \nearrow \nu^r\right) $. 
The sign of the term is determined by the number of iterations. 
Partition $\nu_1$ describes the first application of the procedure (this is why $\nu^1 \in \Po{A}$), partition $\nu_2$ the second, and so on. 
\end{proof}

Observe that different nested upward sequences of partitions $\omega$ may lead to the same cumulant $\kappa_\omega$. The following example illustrates this phenomenon. 

\begin{ex}
\label{3omega}
Let $A_1 = \{ a_1^1 ,a_2^1 \}$ and $A_2 =\{a_1^2 ,a_2^2 \}$. Consider $\omega_1, \omega_2, \omega_3 \in  \Pt{A}$ given by 
$\omega_1 =\left(  \nu^1_1 \nearrow \nu^2_1 \right) $ and 
$\omega_2 = \left( \nu^1_2 \nearrow \nu^2_2\nearrow \nu^3_2 \right) $ and 
$\omega_3 = \left( \nu^1_3 \nearrow \nu^2_3\nearrow \nu^3_3 \right) $, where: 
\begin{equation*}
\begin{array}{ll}
\nu^1_1 = \Big\{ \big\{a_1^1 ,a_2^2 \big\}, \big\{a_2^1,a_1^2  \big\}\Big\} ,&
\nu^2_1 = \bigg\{ \Big\{ \big\{a_1^1 ,a_2^2 \big\}, \big\{a_2^1 ,a_1^2 \big\} \Big\} \bigg\}, \\
\\
\nu^1_2 = \Big\{ \big\{a_1^1 ,a_2^2 \big\}, \big\{a_2^1\big\},\big\{a_1^2  \big\}\Big\}, &
\nu^2_2 = \bigg\{ \Big\{ \big\{a_1^1 ,a_2^2 \big\} \Big\},\Big\{ \big\{a_2^1\big\},\big\{a_1^2  \big\} \Big\} \bigg\}, \\
&\nu^3_2 =  \Bigg\{\bigg\{ \Big\{ \big\{a_1^1 ,a_2^2 \big\} \Big\},\Big\{ \big\{a_2^1\big\},\big\{a_1^2  \big\} \Big\} \bigg\} \Bigg\},\\
\\
\nu^1_3 = \Big\{ \big\{a_1^1\big\},\big\{a_2^2 \big\},\big\{a_2^1 ,a_1^2  \big\}\Big\} ,&
\nu^2_3 = \bigg\{ \Big\{ \big\{a_1^1\big\},\big\{a_2^2 \big\} \Big\},\Big\{ \big\{a_2^1 ,a_1^2  \big\} \Big\} \bigg\} ,\\
&\nu^3_3 =  \Bigg\{\bigg\{ \Big\{ \big\{a_1^1 \big\},\big\{a_2^2 \big\} \Big\},\Big\{ \big\{a_2^1 ,a_1^2  \big\} \Big\} \bigg\} \Bigg\} .\\
\end{array}
\end{equation*}
\noindent
Observe that all sequences $\omega_1, \omega_2, \omega_3$ lead to the same term 
$\kappa\Big( \k{a_1^1 ,a_2^2},\k{a_2^1 ,a_1^2 } \Big)$ 
up to the sign. Moreover, they are the only ones which lead to this cumulant. Observe that

\begin{equation*}
\begin{array}{ll}
{(-1)}^{|\omega_1 |}  \kappa_{\omega_1} +
{(-1)}^{|\omega_2 |}  \kappa_{\omega_2} +
{(-1)}^{|\omega_3 |}  \kappa_{\omega_3} =
\kappa\Big( \k{a_1^1 ,a_2^2},\k{a_2^1 ,a_1^2 } \Big) .
\end{array}
\end{equation*} 

\noindent
With the weights given by $ {(-1)}^{|\omega_i |} $, cumulants corresponding to sequences $\omega_1, \omega_2, \omega_3$ sum up to just one term. We will see that this is true in general. 
\end{ex}

\subsection{Reduced forests and their colourings.}
\label{treesproper}

To each upward nested sequence of partitions $\omega =\left( \nu^1\nearrow\cdots \nearrow \nu^r\right) $ we shall assign a certain rooted forest with a colouring. We construct a bijection between the sequences from $\Pt{A}$ and relevant rooted forests equipped with the colourings.

\begin{de}
\label{tree}
Let $\omega =\left( \nu^1 \nearrow\cdots\nearrow\nu^r\right) $ be a nested sequence of partitions. Denote the elements of partition $\nu^i$ by $\nu^i = \{ \nu^i_1,\ldots,\nu^i_{k_i} \}$. Let $\nu^1 =\{ \nu^1_1,\ldots , \nu^1_{k-1} \}$ be a partition of $A =A_1 \cup\cdots\cup A_n$. 
We associate to $\omega$ a rooted forest with coloured vertices by the following procedure:
\begin{itemize}
\item The elements of $ A$ are leaves of the forest. We colour each of them by $0$. 
\item For each element $\nu^i_j$, where $1\leq i \leq r$ and $1\leq j \leq k_i$, we create a vertex and colour it by $i$. 
\item We join $\nu^{i-1}_j$ and $\nu^i_j$ if $\nu^{i-1}_j \subseteq\nu^i_j$. Similarly we join $a\in A $ and $\nu^1_j$ if $a \in\nu^1_j$.
\item We delete each vertex $v$ which has only one descendant. We join the descendant and the parent of $v$.
\end{itemize}
\noindent
We denote by $\Phi_1(\omega )$ the forest and by $\Phi_2(\omega )$ the colouring associated to $\omega$.
\end{de}

\begin{ex}
The coloured forests associated with $\omega_1, \omega_2, \omega_3 $ from Ex.~\ref{3omega} are presented on \cref{fig1}. Since $\omega_1, \omega_2, \omega_3 $ start from one element partition, all three forests are trees.
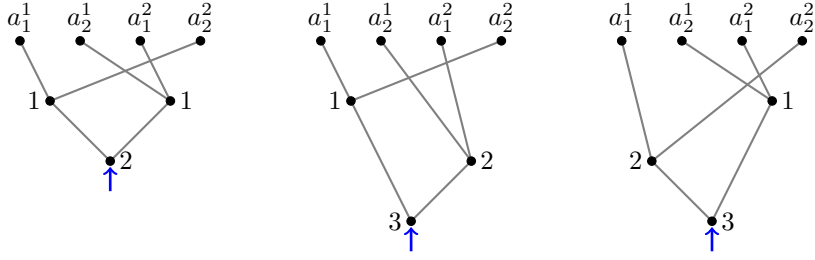
\begin{figure}
\centering
\begin{tikzpicture}[scale=0.8]

\draw[gray, thick] (0,4) -- (0.5,3);
\draw[gray, thick] (3,4) --  (0.5,3);
\draw[gray, thick] (2,4) -- (2.5,3);
\draw[gray, thick] (1,4) -- (2.5,3);

\draw[gray, thick] (0.5,3) -- (1.5,2);
\draw[gray, thick] (2.5,3) -- (1.5,2);

\filldraw[black] (0,4) circle (2pt) node[anchor=south] {$a_1^1$};
\filldraw[black] (1,4) circle (2pt) node[anchor=south] {$a_2^1$};
\filldraw[black] (2,4) circle (2pt) node[anchor=south] {$a_1^2$};
\filldraw[black] (3,4) circle (2pt) node[anchor=south] {$a_2^2$};

\filldraw[black] (0.5,3) circle (2pt) node[anchor=east] {$1$};
\filldraw[black] (2.5,3) circle (2pt) node[anchor=west] {$1$};
\filldraw[black] (1.5,2) circle (2pt) node[anchor=west] {$2$};
\draw[->,line width=1pt, blue] (1.5,1.5) to (1.5,1.9);

\draw[gray, thick] (5,4) -- (5.5,3);
\draw[gray, thick] (8,4) --  (5.5,3);
\draw[gray, thick] (7,4) -- (7.5,2);
\draw[gray, thick] (6,4) -- (7.5,2);

\draw[gray, thick] (5.5,3) -- (6.5,1);
\draw[gray, thick] (7.5,2) -- (6.5,1);

\filldraw[black] (5,4) circle (2pt) node[anchor=south] {$a_1^1$};
\filldraw[black] (6,4) circle (2pt) node[anchor=south] {$a_2^1$};
\filldraw[black] (7,4) circle (2pt) node[anchor=south] {$a_1^2$};
\filldraw[black] (8,4) circle (2pt) node[anchor=south] {$a_2^2$};

\filldraw[black] (5.5,3) circle (2pt) node[anchor=east] {$1$};
\filldraw[black] (7.5,2) circle (2pt) node[anchor=west] {$2$};
\filldraw[black] (6.5,1) circle (2pt) node[anchor=east] {$3$};
\draw[->,line width=1pt, blue] (6.5,0.5) to (6.5,0.9);

\draw[gray, thick] (10,4) -- (10.5,2);
\draw[gray, thick] (13,4) --  (10.5,2);
\draw[gray, thick] (12,4) -- (12.5,3);
\draw[gray, thick] (11,4) -- (12.5,3);

\draw[gray, thick] (10.5,2) -- (11.5,1);
\draw[gray, thick] (12.5,3) -- (11.5,1);

\filldraw[black] (10,4) circle (2pt) node[anchor=south] {$a_1^1$};
\filldraw[black] (11,4) circle (2pt) node[anchor=south] {$a_2^1$};
\filldraw[black] (12,4) circle (2pt) node[anchor=south] {$a_1^2$};
\filldraw[black] (13,4) circle (2pt) node[anchor=south] {$a_2^2$};

\filldraw[black] (10.5,2) circle (2pt) node[anchor=east] {$2$};
\filldraw[black] (12.5,3) circle (2pt) node[anchor=west] {$1$};
\filldraw[black] (11.5,1) circle (2pt) node[anchor=west] {$3$};
\draw[->,line width=1pt, blue] (11.5,0.5) to (11.5,0.9);

\end{tikzpicture}
\caption{The forests associated with $\omega_1, \omega_2, \omega_3 $ from \cref{3omega}. All leaves are coloured by $0$.} \label{fig1}
\end{figure}
\end{ex}

The forest described in Definition \ref{tree} consists of $k_r$ rooted trees, where $k_r$ is a number of elements in $\nu^r$, namely  
$\nu^r = \{ \nu^r_1,\ldots,\nu^r_{k_r} \}$.
The condition of nestedness of $\omega$ translates to the fact that each colour is used. Except for leaves, each vertex has at least two descendants. It leads to the definition of reduced forest and gap-free, weakly-mixing colouring. We mentioned their definitions in the introduction (see \cref{root,def3}).

\begin{lem}
\label{bijekcja}
There exists a bijection $\Phi$ between the set $\Pt{A}$ of nested upward sequences starting with a mixing partition and the set of pairs $(F,c)$ consisting of a reduced forest $F\in\Pht{A}$ of length $r \geq1$ with a gap-free, weakly-mixing colouring $c \in \mathcal{C}_F$:
\begin{equation*}
\Phi : \omega \longmapsto \left( F, c \right):= \left( \Phi_1 (\omega ), \Phi_2 (\omega ) \right) 
\end{equation*} 

For any nested upward sequence starting with a mixing partition $\omega \in \Pt{A}$, the following equality of cumulants holds
\begin{equation*}
\kappa^{}_{\omega}= \kappa^{}_{\Phi_1 (\omega )} ,
\end{equation*}
\noindent
where $\kappa^{}_{\Phi_1 (\omega )}$ is a cumulant of a reduced forest $\Phi_1 (\omega )$, see Definition \ref{xxx}. 

Moreover $|\omega|=|\Phi_2(\omega ) |$ \emph{i.e.}~the length of the nested upward sequence is equal to the length of the corresponding colouring. 
\end{lem}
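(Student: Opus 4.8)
The plan is to prove the three assertions of \cref{bijekcja} together by analysing the construction $\Phi = (\Phi_1, \Phi_2)$ of \cref{tree} directly, and then exhibiting an explicit inverse. First I would verify that $\Phi(\omega)$ genuinely lands in the claimed target set. Given a nested upward sequence $\omega = (\nu^1 \nearrow \cdots \nearrow \nu^r)$ with $\nu^1 \in \Po{A}$, the construction creates one coloured vertex for each block $\nu^i_j$ (colour $i$) together with the leaves (colour $0$), and then contracts every vertex with a single descendant. I must check that the resulting $F = \Phi_1(\omega)$ is a \emph{reduced} forest, \emph{i.e.}~that after contraction every non-leaf has at least two descendants; this is exactly what the deletion step in \cref{tree} enforces. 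For the colouring $c = \Phi_2(\omega)$ I would check the three bullets of \cref{def3}: leaves receive $0$ by construction; each colour $1,\ldots,r$ is used because nestedness guarantees each $\nu^{i+1}$ is a proper partition, so the contraction never removes an entire colour class; and colours strictly increase along root-to-leaf paths because the chain of inclusions $\nu^{i-1}_j \subseteq \nu^i_j$ forces the edge relation to respect the index $i$. The weakly-mixing condition of $c$ must then be matched against the mixing condition $\nu^1 \in \Po{A}$: a vertex of colour $1$ corresponds to a block of $\nu^1$, and the two alternatives of \cref{def3} (some colour-$1$ vertex mixes two multisets, or colour $1$ is unused) correspond precisely to whether $\nu^1$ contains a genuinely mixing block or survives only through contraction.

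Next I would construct the inverse map. Given a pair $(F,c)$ with $F \in \Pht{A}$ reduced and $c \in \mathcal{C}_F$ gap-free weakly-mixing of length $r$, I recover $\omega$ by reading off, for each colour level $i \in \{1,\ldots,r\}$, the partition $\nu^i$ whose blocks are the subtrees of $F$ hanging below the vertices of colour $\geq i$ (equivalently, grouping together the objects of $\nu^{i-1}$ that share a common ancestor of colour $i$). The gap-free condition ensures every level $i$ is non-empty, which is exactly nestedness; the fact that each leaf is coloured $0$ and colours strictly increase guarantees the reconstructed sequence is a legitimate upward chain; and the weakly-mixing condition on $c$ returns $\nu^1 \in \Po{A}$. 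I would then verify $\Phi^{-1} \circ \Phi = \id$ and $\Phi \circ \Phi^{-1} = \id$; the only subtlety is the contraction step, which is what makes $\Phi$ well-defined rather than injective on the level of ``uncontracted'' trees. The point is that the contraction is the unique operation that turns the full level-indexed tree into a reduced one, and conversely a reduced forest with a gap-free colouring determines a unique chain of nested partitions after reinserting the suppressed levels as equalities $\nu^{i+1}=\{\nu^i\}$ are forbidden by nestedness.

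For the cumulant identity $\kappa_\omega = \kappa_{\Phi_1(\omega)}$ I would argue by comparing \cref{cumm} with \cref{xxx}. In \cref{cumm} the cumulant $\kappa_\omega$ is built by iterating the operator $\overline{\kappa}_{\nu^i}$, which at each stage forms a simple cumulant $\kappa(\cdots)$ over each block of $\nu^i$ and finally takes a $\ast$-product over the top-level blocks. In \cref{xxx} the forest cumulant $\kappa_F$ is defined by the same inductive rule $\kappa_v = \kappa(\kappa_{v_1},\ldots,\kappa_{v_n})$ propagated from the leaves, with a $\ast$-product over the roots. Since $\Phi_1$ sends the block $\nu^i_j$ to the vertex of colour $i$ and respects the inclusion structure, I would show by induction on $i$ that $\kappa_{v}$ for the vertex $v$ corresponding to $\nu^i_j$ equals the entry $\k{\cdots}$ produced at stage $i$ of the $\overline{\kappa}$-iteration. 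The contraction step is harmless here precisely because $\kappa(x)=x$ for a single argument, which is the identity invoked in the worked example after \cref{cumm}; collapsing a unary vertex leaves the cumulant unchanged. Matching the root-level $\ast$-products then gives the equality. Finally $|\omega| = |\Phi_2(\omega)|$ is immediate: $|\omega| = r$ is the number of partitions in the chain, and by construction the colours used are exactly $\{0,1,\ldots,r\}$, so the colouring has length $|\Phi_2(\omega)| = r$.

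The main obstacle I anticipate is the bookkeeping around the contraction step: one must check that the deletion of unary vertices is consistent in both directions, so that $\Phi$ is genuinely a bijection and not merely a surjection onto reduced forests. Concretely, the difficulty is that a single reduced forest with a fixed gap-free colouring might a priori arise from several nested sequences differing in how the suppressed unary levels are distributed — \cref{3omega} already shows several distinct $\omega$ mapping to the \emph{same cumulant}, and I must be careful that they map to \emph{distinct coloured forests}, with the collapse of multiplicity happening only at the level of cumulants (through $\kappa(x)=x$) and \emph{not} at the level of the pairs $(F,c)$. Verifying that nestedness (the prohibition $\nu^{i+1}\neq\{\nu^i\}$) exactly rules out the redundancy, so that the colouring $c$ records enough information to reconstruct $\omega$ uniquely, is where I expect the real work to lie.
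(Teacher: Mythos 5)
Your proposal is correct and follows essentially the same route as the paper, whose own proof is only a brief sketch asserting that \cref{tree} gives the forward map, that the inverse "algorithm is easily reproducible", that mixing of $\nu^1$ translates to the weakly-mixing condition, and that \cref{xxx} and \cref{cumm} match term by term. You simply carry out these verifications in detail — including the one genuine subtlety, that the contracted unary levels are still recoverable from the colouring so that distinct sequences yield distinct coloured forests — so there is nothing to add.
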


\begin{proof}
Definition \ref{tree} shows already how to associate a reduced forest $F:= \Phi_1 (\omega )$ with the gap-free colouring $c:=\Phi_2 (\omega )$ to a nested upward sequence $\omega$. The construction is done in such a way that $|c|=|\omega |$. For the reverse direction, the algorithm is easily reproducible. 
The condition that a nested upward sequence $\omega = \left( \nu^1 \nearrow \cdots\nearrow\nu^r \right)  \in \Pt{A}$ starts with a mixing partition $\nu^1 \in \Po{A}$ translates to the condition of $c$ being a weakly-mixing colouring (Definition \ref{def3}). 

In Definition \ref{xxx} we introduced cumulant $\kappa^{}_{F} $ for a forest $F \in \Pht{A}$. There is an exact correspondence between this expression and the one, which is given in Definition \ref{cumm}.
\end{proof}

We are ready to prove Proposition \ref{przedostatnie1} which is the purpose of this section. Let us recall its statement:

\przedostatni*

\begin{proof}
Combining the formula \eqref{sumakumulant} and the Proposition \ref{prop2} lead to the following expression:
\begin{equation*}
\begin{split}
\left( a^1_1 \ast \cdots \ast a_{k_1}^1 \right) \cdots&\left(  a^n_1 \ast \cdots \ast a_{k_n}^n \right) \\ &= \left( a^1_1 \ast \cdots \ast a_{k_1}^1 \right)  \ast\cdots\ast \left(  a^n_1 \ast \cdots \ast a_{k_n}^n \right)
+\sum_{\omega \in \Pt{A} } {(-1)}^{|\omega |}   \kappa_\omega^{} .
\end{split}
\end{equation*}

We identify the product term on the right-hand side of the equation above with the only reduced forest of length $r =0$. Indeed, there is just one reduced forest of length $r =0$ and the only one gap-free, weakly-mixing vertex colouring $c$ of it, namely the forest $F$ consisting of separated vertices $a\in A$, each coloured by $0$. The term $\left( a^1_1 \ast \cdots \ast a_{k_1}^1 \right)  \ast\cdots\ast \left(  a^n_1 \ast \cdots \ast a_{k_n}^n \right)$ is equal to the corresponding cumulant~$\kappa_F$.

We replace the sum term on the right-hand side of the equation above, according to the bijection between sequences $\omega \in \Pt{A}$ and reduced forests of length $r\geq 1$ with gap-free, weakly-mixing colourings given in Lemma \ref{bijekcja}.  
\end{proof}

\section{Proof of Proposition \ref{przedostatnie2}}
\label{6}

In this section we shall prove \cref{przedostatnie2}. For a given reduced forest~$F$, we investigate the following sum
\begin{equation*}
\sum_{ c \in \mathcal{C}_F } {\left( -1\right) }^{|c| } 
\end{equation*} 
\noindent
over all gap-free, weakly-mixing colourings of~$F$, which occur in \cref{przedostatnie1}.

\subsection{Parameter $w_F$ of a reduced forest $F\in \Pht{A}$.}

We introduce an invariant $w_F$ which determines the coefficient of $\kappa_F$. This definition was already mentioned in \cref{main subsection}, we recall it below and next extend it slightly: 

\definitionw*

If $F$ is not mixing, we define $w_F := \infty$. We may also introduce number $w_F$ inductively, according to the height of a forest. 

\begin{de}
Let $T$ be a reduced tree. The height of a tree $T$ is the maximum distance between its root and one of its leaves. The hight of a forest $F$ is a hight of the highest tree in $F$. We denote this quantity by $h(F)$.
\end{de}

\begin{de}[Definition equivalent to \cref{kT}]
\label{remark}
Let $F$ be a reduced forest and $F_1, \ldots,F_r$ its sub-forests obtained by deleting the roots of $F$. We define the number $w_F \in \mathbb{N} \cup \{\infty\}$ inductively on $h(F)$ as follows: 
\begin{equation*}
w_F = 
\begin{cases}
\sum_{i=1}^r k_{F_i} +1   &  \quad $      if $h(F) \geq 2, \\
\infty                    & \quad $      if $h(F) =1$ and all descendants of some root$ \\
                          &   $belong to just one multiset $A_i$ for some $i\in [n], \\
1                         &  \quad $      if $h(F) =1$ and for each root there are at least$  \\
                          &  $two descendants belonging to some two distinct$    \\
                          &  $multisets $A_i$ and $A_j$,$    \\
0                         &  \quad $      if $h(F)=0. \\
\end{cases}
\end{equation*}
\end{de}

\begin{ex} 
Let $A=A_1 \cup A_2 \cup A_3$. On \cref{fig2}, we give an example of two forests (in particular trees) and we count the two corresponding $w_F$ numbers. Observe that number $w_F$ depends on the labels of the leaves of $F$.
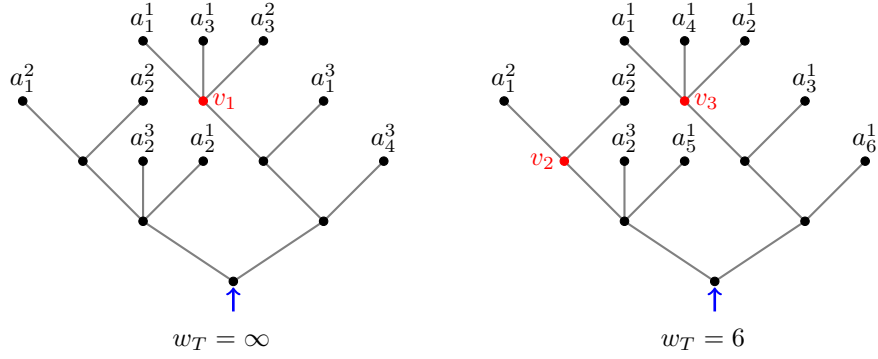
\begin{figure}
\centering
\begin{tikzpicture}[scale=0.8]

\draw[gray, thick] (2,10) -- (3,9);
\draw[gray, thick] (3,10) --  (3,9);
\draw[gray, thick] (4,10) -- (3,9);

\draw[gray, thick] (0,9) -- (1,8);
\draw[gray, thick] (2,9) --  (1,8);
\draw[gray, thick] (5,9) --  (4,8);
\draw[gray, thick] (3,9) -- (4,8);

\draw[gray, thick] (1,8) -- (2,7);
\draw[gray, thick] (2,8) --  (2,7);
\draw[gray, thick] (3,8) -- (2,7);
\draw[gray, thick] (4,8) --  (5,7);
\draw[gray, thick] (6,8) -- (5,7);

\draw[gray, thick] (5,7) --  (3.5,6);
\draw[gray, thick] (2,7) -- (3.5,6);

\filldraw[black] (2,10) circle (2pt) node[anchor=south] {$a_1^1$};
\filldraw[black] (3,10) circle (2pt) node[anchor=south] {$a_3^1$};
\filldraw[black] (4,10) circle (2pt) node[anchor=south] {$a_3^2$};

\filldraw[black] (0,9) circle (2pt) node[anchor=south] {$a_1^2$};
\filldraw[black] (2,9) circle (2pt) node[anchor=south] {$a_2^2$};
\filldraw[black] (5,9) circle (2pt) node[anchor=south] {$a_1^3$};
\filldraw[red] (3,9) circle (2pt) node[anchor=west] {$v_1$};

\filldraw[black] (1,8) circle (2pt) node[anchor=east] {};
\filldraw[black] (2,8) circle (2pt) node[anchor=south] {$a_2^3$};
\filldraw[black] (3,8) circle (2pt) node[anchor=south] {$a_2^1$};
\filldraw[black] (4,8) circle (2pt) node[anchor=east] {};
\filldraw[black] (6,8) circle (2pt) node[anchor=south] {$a_4^3$};

\filldraw[black] (5,7) circle (2pt) node[anchor=west] {};
\filldraw[black] (2,7) circle (2pt) node[anchor=east] {};

\filldraw[black] (3.5,6) circle (2pt) node[anchor=west] {};
\draw[->,line width=1pt, blue] (3.5,5.5) to (3.5,5.9);

\filldraw[black] (3.3,4.7)  node[anchor=south] {$w_T =\infty$};
\filldraw[black] (11.3,4.7)  node[anchor=south] {$w_T =6$};

\draw[gray, thick] (10,10) -- (11,9);
\draw[gray, thick] (11,10) --  (11,9);
\draw[gray, thick] (12,10) -- (11,9);

\draw[gray, thick] (8,9) -- (9,8);
\draw[gray, thick] (10,9) --  (9,8);
\draw[gray, thick] (13,9) --  (12,8);
\draw[gray, thick] (11,9) -- (12,8);

\draw[gray, thick] (9,8) -- (10,7);
\draw[gray, thick] (10,8) --  (10,7);
\draw[gray, thick] (11,8) -- (10,7);
\draw[gray, thick] (12,8) --  (13,7);
\draw[gray, thick] (14,8) -- (13,7);

\draw[gray, thick] (13,7) --  (11.5,6);
\draw[gray, thick] (10,7) -- (11.5,6);

\filldraw[black] (10,10) circle (2pt) node[anchor=south] {$a^1_1$};
\filldraw[black] (11,10) circle (2pt) node[anchor=south] {$a^1_4$};
\filldraw[black] (12,10) circle (2pt) node[anchor=south] {$a^1_2$};

\filldraw[black] (8,9) circle (2pt) node[anchor=south] {$a^2_1$};
\filldraw[black] (10,9) circle (2pt) node[anchor=south] {$a^2_2$};
\filldraw[black] (13,9) circle (2pt) node[anchor=south] {$a_3^1$};
\filldraw[red] (11,9) circle (2pt) node[anchor=west] {$v_3$};

\filldraw[red] (9,8) circle (2pt) node[anchor=east] {$v_2$};
\filldraw[black] (10,8) circle (2pt) node[anchor=south] {$a_2^3$};
\filldraw[black] (11,8) circle (2pt) node[anchor=south] {$a^1_5$};
\filldraw[black] (12,8) circle (2pt) node[anchor=east] {};
\filldraw[black] (14,8) circle (2pt) node[anchor=south] {$a^1_6$};

\filldraw[black] (13,7) circle (2pt) node[anchor=west] {};
\filldraw[black] (10,7) circle (2pt) node[anchor=east] {};

\filldraw[black] (11.5,6) circle (2pt) node[anchor=west] {};
\draw[->,line width=1pt, blue] (11.5,5.5) to (11.5,5.9);

\end{tikzpicture}
\caption{The tree on the left-hand side is mixing. Indeed, descendants of the vertex $v_1$ belong to at least two multisets: $A_1$ and $A_3$. Observe that the tree on the right-hand side is not mixing. Indeed, there are two vertices of height equal to one: $v_2 $ and $v_3$. All descendants of $v_2$ belong to $A_2$ and all descendants of $v_3$ belong to~$A_1$.} \label{fig2}
\end{figure}
\end{ex}

\subsection{The proof of Proposition \ref{przedostatnie2}.}
Let us recall the statement of proposition.

\proppp*

The proof of Proposition \ref{przedostatnie2} is divided into two cases: either a forest $F$ is not mixing, \emph{i.e.} $w_F =\infty$ (\cref{lem1}), or a forest $F$ is mixing, \emph{i.e.} $w_F \neq\infty$ (\cref{lem2}). 
The next two subsections establish these two cases.

\subsection{Proof of the not mixing case.}

\begin{lem}
\label{lem1}
Let $A_1, \ldots , A_n $ be multisets consisting of elements of $\mathcal{A}$. Let $A$ be a sum of those multisets. For any reduced forest $F$ witch is not mixing, we have
\begin{equation*}
\sum_{c \in \mathcal{C}_F } {\left( -1 \right) }^{|c|} =0.
\end{equation*}
\end{lem}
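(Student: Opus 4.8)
The plan is to exhibit a sign-reversing involution on the set $\mathcal{C}_F$ of gap-free, weakly-mixing colourings that pairs each colouring of length $r$ with one of length $r\pm1$; since the weight $(-1)^{|c|}$ then cancels within each pair, the sum vanishes. First I would record where the colour $1$ can sit. Because the leaves are coloured $0$ and the colours strictly increase toward the roots, any vertex coloured $1$ has only leaves as children, i.e.\ it is a vertex all of whose descendants are leaves. Moreover $F$ is not mixing, so it has at least one internal vertex, whence $r\geq1$ and (by gap-freeness) the colour $1$ is always used; thus the weakly-mixing clauses of \cref{def3} reduce to the single requirement that some vertex coloured $1$ has descendants in two distinct multisets. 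Using the negation of the mixing condition of \cref{kT}, I fix once and for all a canonical vertex $v_0$ all of whose descendants are leaves lying in a single multiset $A_i$, breaking ties by a fixed total order on the vertices of $F$.

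Next I would define the involution $\iota$ through $v_0$ and its colour $k:=c(v_0)$, according to whether the colour class of $v_0$ is a singleton. If $v_0$ shares its colour with another vertex, I remove $v_0$ from its class and insert it as a new singleton colour class immediately above, shifting every colour $>k$ up by one, so that $|c|$ increases by $1$. If instead $v_0$ is alone in its class, which will force $k\geq2$, I merge $v_0$ into the class of colour $k-1$ and shift every colour $>k$ down by one, so that $|c|$ decreases by $1$. Strict monotonicity is preserved in both moves: the only descendants of $v_0$ are leaves (colour $0$), any ancestor of $v_0$ carries a strictly larger colour, and any vertex sharing $v_0$'s colour is incomparable to it; hence moving $v_0$ by one level never creates or destroys a strict inequality along an edge. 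Gap-freeness is immediate, since I insert or delete exactly one used colour, and the two operations are visibly mutually inverse, so $\iota$ is an involution that reverses the parity of $|c|$.

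It remains to check that $\iota$ maps $\mathcal{C}_F$ into itself and that the degenerate ``no class below'' case never arises, which is exactly the point where the non-mixing hypothesis enters. If $v_0$ were alone in the colour-$1$ class, it would be the unique vertex of colour $1$; being non-mixing it would then violate both clauses of \cref{def3}, so such a $c$ does not belong to $\mathcal{C}_F$. Consequently, for every $c\in\mathcal{C}_F$ in which $v_0$ is alone we genuinely have $k\geq2$, and the merge is legitimate. Whenever a move touches the colour-$1$ class at all (only when $k\in\{1,2\}$), the mixing colour-$1$ vertex witnessing weak mixing is necessarily one \emph{other} than $v_0$, and it survives the move, so $\iota(c)\in\mathcal{C}_F$. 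The main obstacle is precisely this bookkeeping: verifying that $\iota$ never leaves $\mathcal{C}_F$ and that the sole obstruction to merging downward is the configuration already excluded by weak mixing. Once that is in place, pairing each $c$ with $\iota(c)$ yields $\sum_{c\in\mathcal{C}_F}(-1)^{|c|}=0$, as claimed.
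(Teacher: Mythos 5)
Your proof is correct and takes essentially the same route as the paper: the paper likewise fixes a vertex $v$ whose children are all leaves of a single $A_i$, notes that weak mixing prevents $v$ from being the unique colour-$1$ vertex (so the downward merge is always available), and cancels the sum by pairing colourings in which $v$ is alone in its colour class with those in which it is not, via the same shift-by-one construction. The only difference is presentational — the paper phrases the pairing as a bijection between the sets $\mathcal{C}^1_i$ and $\mathcal{C}^2_{i-1}$ graded by length, while you package it as a single sign-reversing involution.
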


\begin{proof}
Since $F$ is not mixing, there exists a vertex $v$ such that all of its descendants are leaves, and all of them belong to just one multiset $A_i$ for some $i\in [n]$. Consider the following partition of set $\mathcal{C}_F$: 
\begin{equation*}
\left\lbrace \mathcal{C}^k_i \right\rbrace_{i \in \mathbb{Z}}^{k=1,2}
\end{equation*}
where each $\mathcal{C}^1_i$ consists of all $c \in \mathcal{C}_F$ with $|c|=i$ and where the vertex $v$ is coloured by its own colour; $\mathcal{C}^2_i$ consists of all $c \in \mathcal{C}_F$ with $|c|=i$ and where there is another vertex coloured by the same colour as the vertex $v$. We express the sum over all $c \in \mathcal{C}_F$ as follows:
\begin{equation*}
\sum_{c\in \mathcal{C}_F} {\left( -1\right) }^{ |c| } = \sum_{i\in \mathbb{Z}}
\left( \sum_{c\in \mathcal{C}_i^1} {\left( -1\right) }^{ i } + \sum_{c\in \mathcal{C}_i^2} {\left( -1\right) }^{ i } \right) = 
 \sum_{i\in \mathbb{Z}} {\left( -1\right) }^{ i }
\left( \sum_{c\in \mathcal{C}_i^1} 1 - \sum_{c\in \mathcal{C}_{i-1}^2} 1\right) .
\end{equation*}
We will show the equipotency of the sets $\mathcal{C}_i^1$ and $\mathcal{C}_{i-1}^2$ from which it follows that the sum above is equal to $0$ and the statement of the lemma is true. 

Let us construct a bijection between $\mathcal{C}_i^1$ and $\mathcal{C}_{i-1}^2$. Take any $c\in \mathcal{C}_i^1$. Suppose that the vertex $v$ is coloured by $k$. Observe that $k \geq 2$. Indeed, if $v$ was coloured by $1$, it would be the only vertex of this colour. Then, the only $1$-coloured vertex would have descendants belonging to just one multiset $A_i$, which is in contradiction with the fact that $c\in\mathcal{C}_F$ (\emph{i.e.} $c$ is a weakly-mixing colouring). From $c\in \mathcal{C}_i^1$ we construct $c' \in \mathcal{C}_{i-1}^1$ as follows:
\begin{enumerate}
\item keep the colours of vertices coloured by $1,\ldots, k-1$ unchanged,
\item change the colours of vertices coloured by $k,\ldots, i$ to $k-1,\ldots, i-1$ respectively.
\end{enumerate}

\noindent
This procedure is reversible. Indeed, take $c' \in \mathcal{C}_{i-1}^1$ and suppose that vertex $v$ is coloured by $k$ for some $k \geq 1$. Then $c \in \mathcal{C}_{i}^2$ can be recovered by the following procedure:
\begin{enumerate}
\item do not change colours of the vertices coloured by $1,\ldots, k-1$,
\item do not change the colour of $v$, 
\item change the colours of the vertices coloured by $k,\ldots, i-1$ to $k+1,\ldots, i$ respectively (excluding vertex $v$).
\end{enumerate}
\end{proof}

\subsection{How to prove the mixing case?}

We will prove the following lemma.

\begin{lem}
\label{lem2}
Let $A_1, \ldots , A_n $ be multisets consisting of elements of $\mathcal{A}$. Let $A$ be a sum of those multisets. For any reduced mixing forest $F$, we have:
\begin{equation*}
\sum_{c \in \mathcal{C}_F } {\left( -1 \right) }^{|c|} =\left( -1\right)^{w_F}.
\end{equation*}
\end{lem}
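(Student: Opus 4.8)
The plan is to first strip away the weakly-mixing restriction, which turns out to be vacuous here, and then to recognise the signed count as the evaluation at $-1$ of a polynomial that factorises over the trees of $F$. As a preliminary reduction I would show that for $F\in\Pho{A}$ \emph{every} gap-free colouring is automatically weakly-mixing, so that $\mathcal{C}_F$ is simply the set of all gap-free colourings of $F$. Indeed, a vertex coloured by $1$ must have all of its children coloured by $0$, hence all its children — and therefore all its descendants — are leaves; since $F$ is mixing, \cref{kT} forces these leaves to lie in at least two distinct multisets $A_i$, so condition (1) of \cref{def3} holds whenever the colour $1$ is used, while condition (2) holds when it is not. Consequently the sum in question equals $\sum_{c}(-1)^{|c|}$ ranging over all gap-free colourings of $F$.

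Next I would package these colourings into a polynomial. For a nonnegative integer $m$, let $\Omega_F(m)$ count the maps assigning to each internal vertex a value in $\{1,\dots,m\}$ that strictly increases from each vertex to its parent (leaves being fixed at $0$), with \emph{no} surjectivity requirement. Sorting such a map by the $r$-element set of values it attains gives
\begin{equation*}
\Omega_F(m)=\sum_{r\ge 0} s_r(F)\binom{m}{r},
\end{equation*}
where $s_r(F)$ is the number of gap-free colourings of length $r$. As $\Omega_F$ is a polynomial and $\binom{-1}{r}=(-1)^r$, the desired sum is exactly $\Omega_F(-1)$. Because internal vertices in different trees of $F$ are mutually unconstrained and surjectivity is not imposed, the count factorises, $\Omega_F(m)=\prod_{T}\Omega_T(m)$ over the trees $T$ of $F$, whence $\Omega_F(-1)=\prod_T\Omega_T(-1)$ and it suffices to treat a single tree.

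For a tree $T$ with root $\rho$ whose internal children are the roots of subtrees $T_1,\dots,T_p$, conditioning on the value $j$ assigned to $\rho$ yields
\begin{equation*}
\Omega_T(m)=\sum_{j=1}^{m}\ \prod_{i=1}^{p}\Omega_{T_i}(j-1).
\end{equation*}
Reading the right-hand side as the discrete antiderivative $G(m)$ of the polynomial $g(j)=\prod_i\Omega_{T_i}(j-1)$ normalised by $G(0)=0$, the relation $G(0)-G(-1)=g(0)$ gives $\Omega_T(-1)=-g(0)=-\prod_i\Omega_{T_i}(-1)$. Inducting on the number of internal vertices, with the star ($p=0$, so $\Omega_T(-1)=-1$) as base case, I obtain $\Omega_T(-1)=(-1)^{N_T}$, where $N_T=1+\sum_i N_{T_i}$ is the number of internal vertices of $T$. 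Multiplying over the trees of $F$ and recalling that $w_F$ equals the total number of internal vertices of $F$ then yields $\sum_{c\in\mathcal{C}_F}(-1)^{|c|}=\Omega_F(-1)=(-1)^{w_F}$.

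I expect the main obstacle to be the legitimacy of the reciprocity step $\Omega_T(-1)=-g(0)$: one must treat $\Omega_T$ as a genuine polynomial rather than merely a function on the positive integers, and justify that the summation identity persists as a polynomial identity, so that the evaluation at the negative integer $-1$ is meaningful. This is a combinatorial-reciprocity phenomenon (an instance of Stanley reciprocity for order polynomials), and once it is set up carefully the induction closes at once; by contrast, the verification that the weakly-mixing condition is automatic for mixing forests, and the tree-factorisation of $\Omega_F$, are both routine.
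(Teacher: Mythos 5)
Your argument is correct, and it reaches the conclusion by a genuinely different route from the one in the paper. Both proofs begin with the same observation — that for a mixing forest the weakly-mixing condition is automatic, since a vertex coloured $1$ has all its children coloured $0$, hence all its children are leaves, and \cref{kT} then forces them into two distinct multisets — so the sum really runs over all gap-free colourings. From there the paper proceeds quite differently: it first transports the problem from forests to trees by the root-deletion bijection of \cref{rem}, and then, for a tree $T$ with root-subtrees $T_1,\dots,T_r$, it \emph{fixes} the projected colourings $c_1,\dots,c_r$ and computes the signed number of colourings of $T$ projecting onto them. That inner count is handled by a bijection with lattice paths from the origin to $(|c_1|,\dots,|c_r|)$ with steps in $\{0,1\}^r\setminus\{0\}$, followed by a multivariate recursion whose solution is checked to be $-\prod_i(-1)^{|c_i|}$; the whole argument is then closed by induction on the height of $T$. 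You instead encode the colourings into the strict order polynomial $\Omega_F(m)=\sum_r s_r(F)\binom{m}{r}$, identify the desired sum as $\Omega_F(-1)$, factor over trees, and obtain the key sign flip $\Omega_T(-1)=-\prod_i\Omega_{T_i}(-1)$ from the discrete-antiderivative identity $G(0)-G(-1)=g(0)$ — a combinatorial reciprocity argument in place of the paper's signed enumeration of interleavings. The single point you flag as delicate (that the summation identity $\Omega_T(m)=\sum_{j=1}^m\prod_i\Omega_{T_i}(j-1)$ persists as a polynomial identity, so that evaluation at $m=-1$ is legitimate) is indeed the only thing requiring care, and it goes through because both sides are polynomials agreeing on all positive integers, with $\Omega_{T_i}(0)=0$ whenever $T_i$ has an internal vertex. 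What your approach buys is brevity and a conceptual identification of the sum as an order-polynomial evaluation at $-1$; what the paper's approach buys is a fully self-contained elementary computation, at the cost of the explicit path bijection and the verification of the multivariate recursion.
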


To prove the lemma above we show a bijection between gap-free colourings of reduced trees $T \in\Ph{A}$ and gap-free colourings of reduced forests $F\in \Pht{A}$, which are not trees (see \cref{rem}). 
Using this bijection we can restrict the proof of \cref{lem2} just to trees.  
For reduced trees and their gap-free colourings we define a projection of this colourings (see \cref{bez}). 
We make use of the notion of projection in \cref{lemma}.  
Proof of \cref{lem2} is done by induction on number of vertices in tree $T$ and presented in \cref{pp}. 

\subsection{Restriction to the trees.}

\begin{rem}
\label{rem}
There is a natural bijection $f$ between all reduced trees $T \in\Ph{A}$ and all reduced forests $F\in \Pht{A}$, which are not trees. This bijection is obtained by deleting the root of $T$ (see \cref{fig0}). 
Moreover, for a given reduced tree $T\in \Ph{A}$, there is an obvious bijection $f_T$ between all gap-free colourings of $T$ and all gap-free colourings of the corresponding reduced forest $f(T)$, obtained by keeping the colours of the non-deleted vertices, so that
\begin{equation*}
\vert f_T (c) \vert = \vert c \vert -1 .
\end{equation*}
\noindent
Additionally $f_T$ preserves the property of being a weakly-mixing colouring.

The above statement allows us to prove \cref{lem2} just for the case of trees $T\in \Ph{A}$ and conclude the statement for all forests $F\in \Pht{A}$.
Indeed, suppose that the statement of \cref{lem2} holds for trees. 
Consider a mixing forest $F\in \Pht{A}$ which is not a tree. Then, 
the tree $T:=f^{-1} (F) \in \Ph{A}$ is also mixing, hence we can use the statement of \cref{lem2}. 
Observe that $w_F = w_T -1$. 
Using bijections $f$ and $f_T$ we get the following equality

\begin{equation*}
\sum_{c \in \mathcal{C}_F } {\left( -1 \right) }^{|c|}  =
\sum_{c \in \mathcal{C}_{f^{-1}(F)} } {\left( -1 \right) }^{|f_T^{-1} (c)| +1} =
-\sum_{c \in \mathcal{C}_T } {\left( -1 \right) }^{|c|} =
- \left( -1\right)^{w_T} =
\left( -1\right)^{w_F},
\end{equation*}
\noindent
which is the statement of \cref{lem2} for the mixing forest $F\in \Pht{A}$.

\end{rem}

\subsection{Projection of a gap-free colouring.}

For any reduced tree $T\in \Ph{A}$ we consider sub-trees $T_1 ,\ldots, T_k$ formed by deleting the root of $T$. Number $k$ is equal to the degree of the root. Every  sub-tree $T_i$ is also a reduced tree. Every gap-free colouring $c$ induces also a sub-colourings $\bar{c}_1,\ldots, \bar{c}_k$ on  $T_1 ,\ldots, T_k$. Observe that sub-colourings obtained this way are not necessarily gap-free. However, there is a canonical way to make them gap-free.

\begin{de}
\label{bez}
Let $T$ be a reduced tree with a gap-free colouring $c$. 
Let $\bar{c}_1,\ldots, \bar{c}_k$ be the induced colourings on sub-trees $T_1 ,\ldots,T_k$ formed by deleting the root of $T$. 
For some $i \in [k]$, let $j^i_0 <\cdots < j^i_l$ be the sequence of colours used in the colouring $\bar{c}_i$. 
By replacing each $j^i_n$ by $n$ in the colouring $\bar{c}_i$ we obtain a gap-free colouring, which we denote by $c_i$.
We say that $c_i$ as an \emph{$i$-th projection of the colouring} $c$ and denote it as $\p_i (c): =c_i$, see \cref {fig3}.
\end{de}

\begin{figure}
\centering
\begin{tikzpicture}[scale=0.8]

\draw[gray, thick] (2,10) -- (3,9);
\draw[gray, thick] (3,10) --  (3,9);
\draw[gray, thick] (4,10) -- (3,9);

\draw[gray, thick] (0,9) -- (1,8);
\draw[gray, thick] (2,9) --  (1,8);
\draw[gray, thick] (5,9) --  (4,8);
\draw[gray, thick] (3,9) -- (4,8);

\draw[gray, thick] (1,8) -- (2,7);
\draw[gray, thick] (2,8) --  (2,7);
\draw[gray, thick] (3,8) -- (2,7);
\draw[gray, thick] (4,8) --  (5,7);
\draw[gray, thick] (6,8) -- (5,7);

\draw[gray, thick] (5,7) --  (3.5,6);
\draw[gray, thick] (2,7) -- (3.5,6);

\filldraw[black] (2,10) circle (2pt) node[anchor=east] {};
\filldraw[black] (3,10) circle (2pt) node[anchor=west] {};
\filldraw[black] (4,10) circle (2pt) node[anchor=west] {};

\filldraw[black] (0,9) circle (2pt) node[anchor=east] {};
\filldraw[black] (2,9) circle (2pt) node[anchor=west] {};
\filldraw[black] (5,9) circle (2pt) node[anchor=west] {};
\filldraw[black] (3,9) circle (2pt) node[anchor=west] {$1$};

\filldraw[black] (1,8) circle (2pt) node[anchor=east] {$2$};
\filldraw[black] (2,8) circle (2pt) node[anchor=west] {};
\filldraw[black] (3,8) circle (2pt) node[anchor=west] {};
\filldraw[black] (4,8) circle (2pt) node[anchor=east] {$3$};
\filldraw[black] (6,8) circle (2pt) node[anchor=west] {};

\filldraw[black] (5,7) circle (2pt) node[anchor=west] {$4$};
\filldraw[black] (2,7) circle (2pt) node[anchor=east] {$4$};

\filldraw[black] (3.5,6) circle (2pt) node[anchor=west] {$5$};
\draw[->,line width=1pt, blue] (3.5,5.5) to (3.5,5.9);

\draw[gray, thick] (10,10) -- (11,9);
\draw[gray, thick] (11,10) --  (11,9);
\draw[gray, thick] (12,10) -- (11,9);

\draw[gray, thick] (8,9) -- (9,8);
\draw[gray, thick] (10,9) --  (9,8);
\draw[gray, thick] (13,9) --  (12,8);
\draw[gray, thick] (11,9) -- (12,8);

\draw[gray, thick] (9,8) -- (10,7);
\draw[gray, thick] (10,8) --  (10,7);
\draw[gray, thick] (11,8) -- (10,7);
\draw[gray, thick] (12,8) --  (13,7);
\draw[gray, thick] (14,8) -- (13,7);

\filldraw[black] (10,10) circle (2pt) node[anchor=east] {};
\filldraw[black] (11,10) circle (2pt) node[anchor=west] {};
\filldraw[black] (12,10) circle (2pt) node[anchor=west] {};

\filldraw[black] (8,9) circle (2pt) node[anchor=east] {};
\filldraw[black] (10,9) circle (2pt) node[anchor=west] {};
\filldraw[black] (13,9) circle (2pt) node[anchor=west] {};
\filldraw[black] (11,9) circle (2pt) node[anchor=west] {$1$};

\filldraw[black] (9,8) circle (2pt) node[anchor=east] {$2$};
\filldraw[black] (10,8) circle (2pt) node[anchor=west] {};
\filldraw[black] (11,8) circle (2pt) node[anchor=west] {};
\filldraw[black] (12,8) circle (2pt) node[anchor=east] {$3$};
\filldraw[black] (14,8) circle (2pt) node[anchor=west] {};

\filldraw[black] (13,7) circle (2pt) node[anchor=west] {$4$};
\draw[->,line width=1pt, blue] (13,6.5) node[anchor=north,black] {$T_2$} to (13,6.9);
\filldraw[black] (10,7) circle (2pt) node[anchor=east] {$4$};
\draw[->,line width=1pt, blue] (10,6.5) node[anchor=north,black] {$T_1$} to (10,6.9);

\draw[gray, thick] (2,3) -- (3,2);
\draw[gray, thick] (3,3) --  (3,2);
\draw[gray, thick] (4,3) -- (3,2);

\draw[gray, thick] (0,2) -- (1,1);
\draw[gray, thick] (2,2) --  (1,1);
\draw[gray, thick] (5,2) --  (4,1);
\draw[gray, thick] (3,2) -- (4,1);

\draw[gray, thick] (1,1) -- (2,0);
\draw[gray, thick] (2,1) --  (2,0);
\draw[gray, thick] (3,1) -- (2,0);
\draw[gray, thick] (4,1) --  (5,0);
\draw[gray, thick] (6,1) -- (5,0);

\filldraw[black] (2,3) circle (2pt) node[anchor=east] {};
\filldraw[black] (3,3) circle (2pt) node[anchor=west] {};
\filldraw[black] (4,3) circle (2pt) node[anchor=west] {};

\filldraw[black] (0,2) circle (2pt) node[anchor=east] {};
\filldraw[black] (2,2) circle (2pt) node[anchor=west] {};
\filldraw[black] (5,2) circle (2pt) node[anchor=west] {};
\filldraw[black] (3,2) circle (2pt) node[anchor=west] {$1$};

\filldraw[black] (1,1) circle (2pt) node[anchor=east] {$1$};
\filldraw[black] (2,1) circle (2pt) node[anchor=west] {};
\filldraw[black] (3,1) circle (2pt) node[anchor=west] {};
\filldraw[black] (4,1) circle (2pt) node[anchor=east] {$2$};
\filldraw[black] (6,1) circle (2pt) node[anchor=west] {};

\filldraw[black] (5,0) circle (2pt) node[anchor=west] {$3$};
\draw[->,line width=1pt, blue] (5,-0.5) node[anchor=north,black] {$T_2$} to (5,-0.1) ;
\filldraw[black] (2,0) circle (2pt) node[anchor=east] {$2$};
\draw[->,line width=1pt, blue] (2,-0.5) node[anchor=north,black] {$T_1$} to (2,-0.1);

\draw[->,line width=1pt] (9,-1) to (9,2) node[anchor=south] {$T_1$};
\draw[->,line width=1pt] (9,-1) to (13,-1) node[anchor=west] {$T_2$};

\filldraw[black] (10,-1) circle (1pt) node[anchor=north] {$1$};
\filldraw[black] (11,-1) circle (1pt) node[anchor=north] {$2$};
\filldraw[black] (12,-1) circle (1pt) node[anchor=north] {$3$};
\filldraw[black] (9,0) circle (1pt) node[anchor=east] {$1$};
\filldraw[black] (9,1) circle (1pt) node[anchor=east] {$2$};

\draw[-, red, line width=1.5pt] (9,-1) -- (10,-1) ;
\filldraw[red] (10,-1) circle (2.5pt);
\draw[-, red, line width=1.5pt] (10,-1) --  (10,0);
\filldraw[red] (10,0) circle (2.5pt);
\draw[-, red, line width=1.5pt] (10,0) -- (11,0);
\filldraw[red] (11,0) circle (2.5pt);
\draw[-, red, line width=1.5pt] (11,0) --  (12,1);
\filldraw[red] (12,1) circle (2.5pt);

\node[text width=5.5cm] at (3,4.8) 
    {(a) Colouring $c$ of a tree $T$ uses the colours $\{0,1,2,3,4,5 \}$.};
\node[text width=6cm] at (11.5,4.5) 
    {(b) Colouring $\bar{c}_1$ of the tree $T_1$ uses the colours $\{0,2,4\}$. Colouring $\bar{c}_2$ of the tree $T_2$ uses the colours $\{0,1,3,4\}$.};    
    
\node[text width=5.5cm] at (3,-2) 
    {(c) Projections $\p_1 (c)$ and $\p_2 (c)$ use colours $\{0,1,2\}$ and $\{0,1,2,3\}$ respectively.};
\node[text width=6cm] at (11.5,-2.2) 
    {(d) Path $\rho$ constructed from $c$ in the proof of Lemma \ref{lemma}.};

\end{tikzpicture}
\caption{(a) A reduced tree $T$ with a gap-free colouring $c$. (b) By deleting the root we obtain two reduced subtrees: $T_1$ and $T_2$ with inherited colourings: $\bar{c}_1$ and $\bar{c}_2$. Observe that they are not gap-free. (c) However, the procedure given in Definition \ref{bez} describes the canonical way of producing a gap-free colourings $\p_1 (c)$ and $\p_2 (c)$. (d) Moreover, for the colouring $c$ we present an associated path $\rho$ which will be introduced in a proof of Lemma \ref{lemma}.} \label{fig3}
\end{figure}
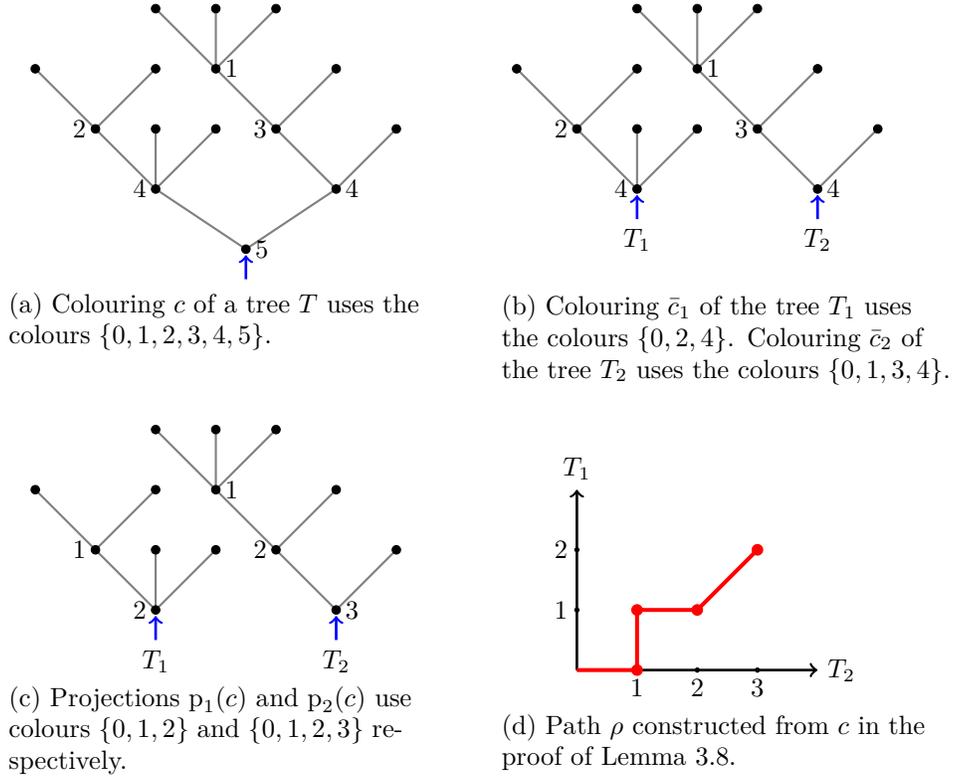

\begin{lem}
\label{lemma}
Let $T$ be a reduced mixing tree of height $h(T) \geq 2$. Denote by $T_1, \ldots,T_r$ all sub-trees obtained by deleting the root of $T$. Let $c_1, \ldots ,c_r$ be gap-free colourings of $T_1, \ldots,T_r$ respectively. Then the following equality holds:
\begin{equation*}
\sum_{\substack{c \in \mathcal{C}_T \\  \p_i(c)= c_i}} {\left( -1 \right) }^{|c|} =
- \prod_{i=1}^r {\left( -1 \right) }^{|c_i|} .
\end{equation*}
\end{lem}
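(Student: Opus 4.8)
The plan is to strip away the weakly-mixing hypothesis, recast the weighted sum as a signed count of lattice paths that merge the colourings of the sub-trees, and then evaluate that count by a one-step recurrence. First I would observe that for a \emph{mixing} $T$ the weakly-mixing requirement in the definition of $\mathcal{C}_T$ is automatic: since $c$ is gap-free of length $|c|\ge h(T)\ge 2$, the colour $1$ is used, and any vertex coloured $1$ has all of its children coloured $0$, hence all of its descendants are leaves; mixingness of $T$ forces these leaves to lie in at least two distinct multisets, so condition (1) of \cref{def3} holds. Thus on a mixing tree $\mathcal{C}_T$ is simply the set of all gap-free colourings of $T$, and I may ignore the mixing clause in the summation.

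Next comes the bijection. Fixing the projections $\p_i(c)=c_i$ fixes the \emph{shape} of $c$ on each sub-tree $T_i$: the restriction $\bar c_i$ uses colours $0=a_0^i<a_1^i<\cdots<a_{m_i}^i<|c|$ with $m_i:=|c_i|$, and $c_i$ is recovered by relabelling these to $0,\ldots,m_i$. The only remaining freedom is the choice of the actual positive values $a_1^i<\cdots<a_{m_i}^i$. Writing $S_t:=\{\,i: t\in\{a_1^i,\dots,a_{m_i}^i\}\,\}$ for $t\ge 1$, gap-freeness of $c$ below the root translates exactly into the condition that $S_1,\dots,S_L$ are all nonempty, where $L$ is the largest colour used below the root, while the root receives colour $|c|=L+1$; each index $i$ lies in exactly $m_i$ of the $S_t$. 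Conversely any such sequence of nonempty subsets reconstructs a unique gap-free $c$ with $\p_i(c)=c_i$. Equivalently, these colourings are in bijection with monotone lattice paths from $\mathbf 0$ to $\mathbf m=(m_1,\dots,m_r)$ whose steps each advance a nonempty subset $J\subseteq[r]$ of the coordinates by one, the number of steps being $L=|c|-1$ (this is the path $\rho$ of \cref{fig3}(d)).

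Then I would do the sign bookkeeping and the evaluation. Because $|c|=L+1$, the off-by-one at the root gives $\sum_{c:\,\p_i(c)=c_i}(-1)^{|c|}=-\sum_\rho(-1)^{L(\rho)}$, the sum running over the lattice paths just described. Setting $P(\mathbf m):=\sum_\rho(-1)^{L(\rho)}$ and conditioning on the last step, which advances some $\emptyset\ne J\subseteq\operatorname{supp}(\mathbf m)$ from $\mathbf m-\mathbf 1_J$, yields the recurrence $P(\mathbf m)=-\sum_{\emptyset\ne J\subseteq\operatorname{supp}(\mathbf m)}P(\mathbf m-\mathbf 1_J)$ with $P(\mathbf 0)=1$. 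Induction on $\sum_i m_i$, using the elementary identity $\sum_{\emptyset\ne J\subseteq[s]}(-1)^{|J|}=-1$ valid for $s\ge 1$, then gives $P(\mathbf m)=(-1)^{m_1+\cdots+m_r}$. Combining, $\sum_{c:\,\p_i(c)=c_i}(-1)^{|c|}=-(-1)^{\sum_i m_i}=-\prod_{i=1}^r(-1)^{|c_i|}$, which is the assertion.

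I expect the main obstacle to be the bijection in the second paragraph: one must check carefully that fixing the projections really leaves precisely the interleaving data free, that this interleaving is unconstrained \emph{across} the different sub-trees, and that the gap-free condition on the whole of $T$ is equivalent to the nonemptiness of every $S_t$ — and one must track the off-by-one coming from the root colour, which is exactly what produces the global minus sign. Once the bijection and this sign are in place, the evaluation of $P(\mathbf m)$ is routine.
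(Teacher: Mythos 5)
Your proposal is correct and follows essentially the same route as the paper: the same preliminary observation that mixingness makes the weakly-mixing condition automatic, the same bijection between colourings with prescribed projections and lattice paths with steps in $\{0,1\}^r\setminus\{0\}$ (including the off-by-one at the root), and the same last-step recurrence for the signed path count. The only cosmetic difference is that the paper verifies a closed-form candidate against the recurrence while you unwind it directly by induction via the alternating-sum identity; both reduce to the same binomial cancellation.
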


\begin{proof}[Proof of Lemma \ref{lemma}]
\label{rho}
Observe that if $T$ is the mixing tree, then any gap-free colouring $c$ belongs to $\mathcal{C}_T$. Indeed, take any vertex $v$ coloured by $1$. Clearly, its descendants are leaves labelled by elements of at least two distinct multisets $A_i$ and $A_j $ (by assumption that $T$ is mixing). The existence of such a vertex implies that $c\in \mathcal{C}_T$. 

The proof is divided into three steps: first, we construct a bijection between the gap-free colouring $c$ projecting onto $c_1 ,\ldots , c_r$ and some integer paths in $\mathbb{N}^r$; second, we introduce a generating function of these paths and characterise it by recursion on the endpoints and some boundary condition; finally, we find a function satisfying those conditions.\hfill \break

\indent
$\textit{Step 1.}$
Let us recall that any gap-free colouring $c$ of $T$ induces colourings $\overline{c}_i $ on $T_i$, from which we deduce a gap-free colouring $c_i$ of $T_i$ (Definition \ref{bez}). We shall construct a bijection between all gap-free colourings $c$ of $T$ projecting on $c_1,\ldots,c_r$ and all integer paths $\rho $ such that:
\begin{itemize}
\item $\rho$ connects $(0, \ldots, 0) $ and $\left( |c_1|, \ldots,|c_r|\right)  \in \mathbb{N}^r$,
\item each step of $\rho$ is of the following form: 
$\left( k^{n}_1, \ldots,k^{n}_r\right) \in \{0,1\}^r \setminus \left( 0,\ldots ,0 \right)  .$
\end{itemize}
\noindent
Denote the class of such paths by $\mathcal{P}_{ |c_1|, \ldots,|c_r|}$. Moreover the construction is done in such a way that $|c| =|\rho |+1$, where by $|\rho |$ we denote the number of steps in $\rho$. \hfill \break

For a gap-free colouring $c$ we construct a path $\rho$ starting from $(0, \ldots, 0) \in \mathbb{N}^r$ by the following procedure: the $n$-th step of $\rho$ is of the form $\left( k^n_1, \ldots, k^n_r \right) $ where:
\begin{equation*}
k^n_i =
\begin{cases}
1    &$if colour $n$ appears in $\overline{c}_i , \\
0    &$if it does not.$
\end{cases}
\end{equation*}
\noindent
An example of such path is presented on Figure \ref{fig3}. 

The procedure described above is reversible. Indeed, take a path $\rho$ between $\left( 0, \ldots, 0\right) $ and $ \left( |c_1|, \ldots, |c_r|\right)  \in \mathbb{N}^r$. Suppose that the $n$-th step is of the form:
\begin{equation*}
\left( k^{n}_1, \ldots,k^{n}_r\right) \in \{0,1\}^r \setminus \left( 0,\ldots ,0 \right)  .
\end{equation*}
\noindent
We can assign to the path $\rho$ a colouring $c$ by the following procedure. Let $\left( x_1,\ldots ,x_r \right)$ 
be an endpoint of $\rho$ after the $n$-th step. 
We colour each vertex $v\in F_i$ by $n$ if $v$ was coloured by $x_i$ in colouring $c_i$ and $k_i^n \neq 0 $. We colour the root by $|\rho | +1$.\hfill \break

\indent
$\textit{Step 2.}$
The bijection from $\textit{Step 1}$ 
was constructed in such a way that $|c| = |\rho | +1$. Observe that
\begin{equation*}
\sum_{\substack{c \in \mathcal{C}_{T} \\ \p_i (c)=c_i }} \left( -1 \right)^{|c|} =
\sum_{\rho \in \mathcal{P}_{ |c_1|, \ldots, |c_r| }} \left( -1 \right)^{|\rho|+1}.
\end{equation*} 

Let us define a function $F : \mathbb{Z}^r  \longrightarrow \mathbb{Z}$:
\begin{equation*}
F: (x_1, \ldots, x_r)    \longmapsto      
\sum_{\rho \in \mathcal{P}_{x_1,\ldots , x_r}} \left( -1 \right)^{|\rho|+1}.
\end{equation*}

\noindent
Observe that:
\begin{enumerate}[I.]
\item \label{1a} for all $(x_1, \ldots, x_r) \not\in \mathbb{N}^r$, $F(x_1, \ldots, x_r) =0 $, 
\item \label{2a} $F(0, \ldots, 0) =-1$, 
\item \label{3a} for all $(x_1, \ldots, x_r) \in \mathbb{N}^r \setminus (0,\ldots ,0)$, the function $F$ satisfies the following recursive formula:  
\begin{equation*}
F(x_1, \ldots, x_r) = -\mathop{\sum}\limits_{\substack{X \subseteq [r] \\ X\neq \emptyset}}^{} 
 F(\overline{x}^X_1, \ldots, \overline{x}^X_r),
\end{equation*}
\noindent
where $\overline{x}^X_i = \left\lbrace  \begin{smallmatrix} x_i&$ if $ i\not\in X \\ x_i -1&$ if $ i\in X  \end{smallmatrix} \right. $
\end{enumerate}

\indent
Let us shortly comment on this observation. There are no paths connecting $(0, \ldots, 0)$ with points $(x_1, \ldots,x_r) \not\in \mathbb{N}^r$ using the set of steps which is non-negative (Observation \ref{1a}). There is just one path connecting point $(0,\ldots,0)$ to itself: the empty path. Its length is equal to $0$ (Observation \ref{2a}). Consider all possibilities for the last step in path $\rho$. It is equivalent to choosing indices $X \subset [r] $, $X\neq \emptyset$ and summing over all paths ending in $(\overline{x}^X_1,\ldots,\overline{x}^X_r)$ multiplied by $-1$, because we count the sign of the path (Observation \ref{3a}).

Those three statements about function $F$ define it uniquely. 
The recursive formula gives us the way to compute $F(x_1,\ldots,x_r)$ inductively according to $\mathop{\sum}\limits_{i=1}^{r} x_i $. 
The first and the second observation give us the starting point for our induction, namely the values $F(x_1,\ldots,x_r)$ for $\mathop{\sum}\limits_{i=1}^{r} x_i =0$.\hfill \break

\indent
$\textit{Step 3.}$
We show now that the function $G : \mathbb{Z}^r  \longrightarrow \mathbb{Z}$:
\begin{equation*}
G: (x_1,\ldots,x_r) \longmapsto  
\left\lbrace 
\begin{array}{ll}
  - \prod_{i=1}^r \left( -1 \right)^{x_i}    & $if for all $i: x_i\geq0 ,\\
0                                                                           &$otherwise.$
\end{array}  
\right.  
\end{equation*}
satisfies all three properties \ref{1a}, \ref{2a}, \ref{3a} mentioned in $\textit{Step 2.}$ Hence, those two functions $F$ and $G$ are equal. By connecting the results of each step, we get the statement of the lemma:
\begin{equation*}
\begin{array}{rl}
\sum_{\substack{c \in \mathcal{C}_{T} \\ p_i (c)=c_i }} \left( -1 \right)^{|c|} &=
\sum_{\rho \in \mathcal{P}_{( |c_1|, \ldots,|c_r| )}} \left( -1 \right)^{|\rho|+1} =
F(|c_1|, \ldots,|c_r|) = \\
&=G(|c_1|, \ldots,|c_r|) =
- \prod_{i=1}^r \left( -1 \right)^{|c_i|}.
\end{array}
\end{equation*} 

We shall show that function $G$ satisfies three properties \ref{1a}, \ref{2a}, \ref{3a}. Clearly, it satisfies \ref{1a}, \ref{2a}. In order to show that the recursive formula also holds, take any 
$(x_1,\ldots,x_r) $: $\mathop{\forall}\limits_{i}^{} x_i \geq 0 $ and $\mathop{\sum}\limits_{i=1}^{r} x_i >0$.\hfill \break
\indent
Define the set $Y$ consisting of boundary indices $i$ of the point $(x_1,\ldots,x_r) $. More precisely, define $Y \subset [r]$ as follows: $\left\lbrace  \begin{smallmatrix} i \in Y &$ if $ x_i >0 \\ i \not\in Y &$ if $ x_i =0 \end{smallmatrix} \right. $. \hfill \break
\indent
In order to show that $G$ satisfies  \ref{3a}, we have to show the vanishing of the following sum: 
\begin{equation}
\label{van}
\begin{split}
G(x_1,\ldots,x_r) + 
\mathop{\sum}\limits_{\substack{X \subset [r] \\ X\neq \emptyset}}^{} 
G(\overline{x}^X_1,\ldots,\overline{x}^X_r) = 
\mathop{\sum}\limits_{X \subset [r]}^{} 
G(\overline{x}^X_1,\ldots,\overline{x}^X_r) =\\
=\mathop{\sum}\limits_{X \subset Y}^{} 
G(\overline{x}^X_1,\ldots,\overline{x}^X_r) +
\mathop{\sum}\limits_{X \not\subset Y}^{} 
 G(\overline{x}^X_1,\ldots,\overline{x}^X_r) .
\end{split}
\end{equation}

Observe that summants of the sum over $X \not\subset Y$ are equal to $0$. From $X \not\subset Y$ it follow that there exists $i \in [r]$ such that $i\in X$ and $i\not\in Y$. It means that $\overline{x}^X_i =-1$ and by definition $G(\overline{x}^X_1,\ldots,\overline{x}^X_r) =0$. \hfill \break
\indent
Observe that the summants in the sum over $X \subset Y$ are of the form $- \prod_{i=1}^r \left( -1 \right)^{\overline{x}^X_i}$. Indeed, from $X \subset Y$ it follows that $\overline{x}^X_i \geq 0$ for all $i \in [r]$. Thus we have 
\begin{equation*}
\begin{split}
\eqref{van} = 
\mathop{\sum}\limits_{X \subset Y}^{} 
- \prod_{i=1}^r \left( -1 \right)^{\overline{x}^X_i} =
-\mathop{\sum}\limits_{X \subset Y}^{} 
\left( -1 \right)^{\mathop{\sum}\limits_{i=1}^{r} x_i -|X|} \\
=\left( -1 \right)^{\mathop{\sum}\limits_{i=1}^{r} x_i } \cdot 
\mathop{\sum}\limits_{i=0}^{|Y|} {{|Y|}\choose{i}} \cdot \left( -1 \right)^{i} =
0.
\end{split}
\end{equation*}
\end{proof}

\subsection{Proof of \cref{lem2}.}
\label{pp}

\begin{rem}
\label{cor1}
Let $T$ be a reduced mixing tree such that $h(T) \leq 2$. Let $T_1, \ldots , T_r$ be sub-trees obtained from $F$ by deleting the roots. For any colouring $c \in \mathcal{C}_F$ the projection $c_i :=\p_i (c)$ is in $ \mathcal{C}_{T_i}$ for each $i\in [r]$.

Indeed, by definition, $c_i =\p_i (c)$ are gap-free colourings of $F_i$. Take any vertex $v$ coloured by $1$ in $c_i$. Its descendants are leaves. Hence $w_F \neq \infty$, so also $k_{F_i} \neq \infty$. That means that descendants of $v$ belong to at least two distinct multisets $A_i$. The existence of such vertex implies that $c\in  \mathcal{C}_{T_i}$.
\end{rem}

\begin{proof}[Proof of Proposition \ref{lem2}]

We will use induction on the height of tree $T\in\Ph{A}$.

We cannot begin with a tree of height $0$, namely a one-vertex tree $T =\bullet$, because there is no such s tree if $|A| \geq 2$.\hfill \break 

\indent
\textit{Induction base.} We begin from a tree $T$ of height one, namely consisting just of the root and leaves. 
We have exactly one gap-free colouring of length $1$, namely leaves are coloured by $0$ and the root by $1$. 
The claim follows immediately. \hfill \break 

\indent
\textit{Induction step.} Let $n \geq 2$. Suppose now that the statement of Lemma \ref{lem2} is true for any tree $T$ of the height $h(T) \leq n-1$. We will show that it is also true for any tree of height equal to $n \geq 2$. Take such a tree $T$. Denote by $T_1 , \ldots , T_r$ its sub-trees obtained from $T$ by deleting the root. Clearly for every $T_i$, the $h(T_i) \leq n-1$ and we can use the induction hypothesis for them. We have:

\begin{align*}
\sum_{c \in \mathcal{C}_T} {\left( -1\right) }^{|c| } 
&\overset{\text{Remark \ref{cor1}}}{=}
\sum_{\substack{c_1 \in \mathcal{C}_{T_1} \\ \cdots \\ c_r \in \mathcal{C}_{T_r} }} \sum_{\substack{c \in \mathcal{C}_T \\ \p_i (c)=c_i}} {\left( -1\right) }^{|c| } \\
& \overset{\text{Lemma \ref{lemma}}}{=} 
- \sum_{\substack{c_1 \in \mathcal{C}_{T_1} \\ \cdots \\ c_r \in \mathcal{C}_{T_r} }} \prod_{i=1}^r {\left( -1\right) }^{|c_i| } 
 =- \prod_{i=1}^r \sum_{c_i \in \mathcal{C}_{T_i} }  {\left( -1\right) }^{ |c_i| }  \\
& \overset{\text{Induction}}{=} 
- \prod_{i=1}^r  {\left( -1\right) }^{ k_{T_i} }   \\
& \overset{\text{Definition \ref{remark}}}{=} 
{\left( -1\right) }^{ k_{T} } , \\
\end{align*}
\noindent
which proves the statement for any tree of height equal to $n$.
\end{proof}

\let\k\oldk
\bibliographystyle{alpha}
\bibliography{Jack_bib}

\end{document}